\newcounter{stepctr}
{\end{list}}
\newtheorem{thm}{Theorem}[section]
\newtheorem{prop}[thm]{Proposition}
\newtheorem{cor}[thm]{Corollary}
\theoremstyle{definition}
\newtheorem{dfn}[thm]{Definition}
\newtheorem{rema}[thm]{Remark}
\newtheorem{lem}[thm]{Lemma}
\newtheorem{prob*}{Open problem}
\newcommand{\demo}{\begin{proof}}
\newcommand{\R}{\ensuremath{\mathcal{R}}}
\newcommand{\N}{\mathbb{N}}
\newcommand{\C}{\mathbb{C}}
\def\ll^2{{\mathcal L}(\ell^2(\N))}
\def\f^0x{{\mathcal F^0}(X) }
\title
{\bf  Generalization of Kato's decomposition}
\author{   Zakariae Aznay, Abdelmalek Ouahab, Hassan Zariouh }
\date{}
\begin{document}

\maketitle \thispagestyle{empty}

\begin{abstract}\noindent\baselineskip=10pt
The  Kato's decomposition \cite[Theorem 4]{kato} is  generalized to semi-B-Fredholm operators.

\end{abstract}

 \baselineskip=15pt
 \footnotetext{\small \noindent  2010 AMS subject
classification: Primary 47A53, 47A10, 47A11 \\
\noindent Keywords: Kato's decomposition,  semi-B-Fredholm operator} \baselineskip=15pt

\section{Introduction and preliminaries}
 Let $T \in L(X);$ where   $L(X)$ is the  Banach algebra of bounded linear operators acting on an infinite dimensional complex Banach space $X.$  We denote by $T^{*}$    the dual of $T,$ by $\alpha(T)$ the dimension of the kernel $\mathcal{N}(T)$ and by  $\beta(T)$    the codimension of the range $\R(T).$    A  subspace $M$ of $X$ is $T$-invariant if $T(M)\subset M$ and in this case  $T_{M}$ means the restriction of $T$ on $M.$  We say that  $T$   is completely reduced   by  a pair    $(M,N)$   ($(M,N) \in Red(T)$ for brevity) if $M$ and  $N$ are  closed $T$-invariant subspaces  of $X$ and  $X=M\oplus N;$   here  $M\oplus N$ means that $M\cap N=\{0\}.$    Let  $n\in\N,$  we denote by   $T_{[n]}:=T_{\mathcal{R}(T^{n})}$  (in particular, $T_{[0]} = T$). It is clear that  $(\alpha(T_{[n]}))_{n}$ and $(\beta(T_{[n]}))_{n}$ are  decreasing sequences. We call      $m_T:=\mbox{inf}\{n \in \N  :  \mbox{inf}\{\alpha(T_{[n]}),\beta(T_{[n]})\}<\infty\}$  (with $\mbox{inf}\emptyset=\infty$) the essential  degree of   $T.$ Following \cite{mbekhta} we say that $T$ has   finite essential ascent (resp., descent)  if  $a_{e}(T):=\mbox{inf}\{n \in \N  :  \alpha(T_{[n]})<\infty\}<\infty$  (resp., $d_{e}(T):=\mbox{inf}\{n \in \N  :  \beta(T_{[n]})<\infty\}<\infty$).   Note that $m_{T}= \mbox{inf}\{a_{e}(T),d_{e}(T)\}.$ Moreover,  if $T$ has finite essential ascent  and  essential descent  then $m_{T}=a_{e}(T)=d_{e}(T).$    Operators with finite essential ascent or descent seem to have been first studied in \cite{grabiner1},  these operators played an important role  in many papers, see for example \cite{berkani, berkani-sarih,grabiner, mbekhta}.

 \par  It is easily seen that    the definition given in  \cite{berkani,berkani-sarih} of  an   \emph{upper semi-B-Fredholm}  (resp., \emph{lower  semi-B-Fredholm}) operator $T$  is equivalent  to the following definition given in  \cite{mbekhta}: $a_{e}(T)<\infty$  and   $\mathcal{R}(T^{a_{e}(T)+1})$ is closed (resp., $d_{e}(T)<\infty$  and $\mathcal{R}(T^{d_{e}(T)})$  is closed).
  $T$ is called  \emph{semi-B-Fredholm} (resp., \emph{B-Fredholm}) if  it  is an upper or (resp., and) a lower semi-B-Fredholm. According to Corollary \ref{cor1} below, if $T$ is a semi-B-Fredholm   then $\R(T^{m_T})$ is closed and $m_T=\mbox{min}\{n \in \N: \R(T^n)  \text{ is closed and } \mbox{inf}\{\alpha(T_{[n]}),\beta(T_{[n]})\}<\infty\},$  and in this case   the    index  of $T$ is  defined  by $\mbox{ind}(T) := \alpha(T_{[m_{T}]})-\beta(T_{[m_{T}]}).$  If $T$ is an upper semi-B-Fredholm (resp.,  lower semi-B-Fredholm,    semi-B-Fredholm, B-Fredholm) with essential degree $m_{T}=0,$ then $T$ is said to be  an upper semi-Fredholm (resp.,  lower semi-Fredholm,   semi-Fredholm, Fredholm) operator.
\par  The degree of stable iteration $d:=\mbox{dis}(T)$ of $T$ is defined as  $d=\mbox{inf}\,\Delta(T);$   where    $$\Delta(T):=\{m\in\N \,:\,  \alpha(T_{[m]})= \alpha(T_{[r]}),\,\forall r \in \N  \,  \, r\geq m \}.$$  We say that $T$ is semi-regular if $\R(T)$ is closed and $d=0,$ and we say that    $T$ is quasi-Fredholm if there exists $n \in \N$ such that  $\R(T^{n})$  is closed  and $T_{[n]}$  is semi-regular.  From \cite[Lemma 12]{mbekhta}, it follows that if $T$ is a quasi-Fredholm operator then  $d=\mbox{min}\{n \in \N :  \R(T^{n}) \text{ is closed and } T_{[n]} \text{ is semi-regular}\}.$  For more details about these definitions, see for examples \cite{mbekhta,labrousse}. Note that every semi-B-Fredholm operator is quasi-Fredholm \cite[Proposition 2.5]{berkani-sarih}.  We recall \cite[p. 288]{kato} that  $v(T):=\mbox{inf}\{n \in \N : \mathcal{N}(T)\not\subset \R(T^{n})\}$ which play a crucial role in several studies of this direction  \cite{kato, west}. Hereafter we call $v:=v(T)$  the degree of the  semi-regularity of $T.$
\vspace{0.2cm}
 \par In \cite{west1,west},  West simplified  the proof of a special case of  a Kato's result (named Kato's decomposition) \cite[Theorem 4]{kato}  which establishes that  if   $T$ is a semi-Fredholm operator  which is not semi-regular then there exists $(M,N) \in Red(T)$   such that $\mbox{dim}\,N<\infty,$ $T_{M}$ is semi-regular (i.e. $\mbox{dis}(T_{M})=m_{T}=0$) and $T_{N}$ is nilpotent of degree $v.$ To prove this result \cite{west},  West used the notion of  jumping $"\mbox{jump}(T)"$ of a  semi-Fredholm operator $T$  introduced by himself, and the degree $v$  of  the   semi-regularity of $T.$  In this paper we generalize the concept of jumping  to semi-B-Fredholm operators. By using our  new concept of jumping,    the degree of stable iteration $d$  and the essential degree $m_{T}$ of an operator $T,$   we generalize the Kato's decomposition  to    semi-B-Fredholm operators. In other words, we prove in Theorem \ref{thmm} that if $T$ is semi-B-Fredholm then  there exists $(M,N) \in Red(T)$   such that $\mbox{dim}\,N<\infty,$ $(T_{M})_{[m_{T}]}$ is semi-regular   (i.e. $\mbox{dis}(T_{M})=m_{T}$)  and $T_{N}$ is nilpotent of degree $d.$ Among other things,  some   additional results for   semi-B-Fredholm operators are given, and  some known results in Kato's theory are generalized (Theorem \ref{thm0},  Corollary  \ref{corrr} and Theorem \ref{thmunique}).

\section{Additional results for  semi-B-Fredholm operators}
The following proposition  extend \cite[Lemma 11]{mbekhta}.
\begin{prop} Let $T \in L(X)$ and  let $n,m \in \N.$ If  $\R(T^{n})$ is closed and $\R(T^{m})+\mathcal{N}(T^{n})$ is closed then  $\R(T^{m+n})$ is closed.
\end{prop}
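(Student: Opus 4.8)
The plan is to realize $\R(T^{m+n})$ as the image under $T^{n}$ of the closed subspace appearing in the hypothesis, and then to invoke the general principle that an operator with closed range carries closed subspaces containing its kernel onto closed subspaces. This turns the problem into one elementary identity plus one application of the open mapping theorem.

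First I would record the set-theoretic identity
\[
\R(T^{m+n}) = T^{n}\big(\R(T^{m})\big) = T^{n}\big(\R(T^{m})+\NN(T^{n})\big),
\]
where the first equality is simply $T^{m+n}(X)=T^{n}(T^{m}(X))$, and the second holds because $T^{n}$ annihilates $\NN(T^{n})$, so adjoining $\NN(T^{n})$ to $\R(T^{m})$ leaves the image unchanged. Put $L:=\R(T^{m})+\NN(T^{n})$; by hypothesis $L$ is closed, and by construction $\NN(T^{n})\subset L$. Thus it suffices to show that $T^{n}(L)$ is closed.

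The crux is the following observation, which I would establish as a short sublemma: if $S\in L(X)$ has closed range and $L\subset X$ is closed with $\NN(S)\subset L$, then $S(L)$ is closed. To prove it, let $Q:X\to X/\NN(S)$ be the canonical quotient map; since $\R(S)$ is closed, the induced map $\widetilde{S}:X/\NN(S)\to\R(S)$ is a continuous bijection between Banach spaces, hence an isomorphism by the open mapping theorem. Because $\NN(S)\subset L$ and $L$ is closed, the image $Q(L)$ satisfies $Q^{-1}(Q(L))=L$, so $Q(L)$ is closed in $X/\NN(S)$; applying the isomorphism $\widetilde{S}$ gives that $S(L)=\widetilde{S}(Q(L))$ is closed.

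Applying this sublemma with $S=T^{n}$ (whose range is closed by assumption) and the closed subspace $L=\R(T^{m})+\NN(T^{n})\supset\NN(T^{n})$ yields that $T^{n}(L)$ is closed, and by the identity of the first step this is exactly $\R(T^{m+n})$. The only delicate point is the sublemma, and there the single real ingredient is that $\widetilde{S}$ is an isomorphism, which rests on the open mapping theorem together with the closedness of $\R(S)$; the passage between $L$ closed and $Q(L)$ closed, and the manipulation of images and kernels, are routine.
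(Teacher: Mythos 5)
Your proof is correct and rests on the same mechanism as the paper's: the closedness of $\R(T^{n})$ makes the induced operator on $X/\mathcal{N}(T^{n})$ bounded below (an isomorphism onto its closed range), and the closedness of $\R(T^{m})+\mathcal{N}(T^{n})$, which is saturated for that quotient, then transfers to its image $\R(T^{m+n})$. The paper runs this as a direct sequential argument ($T^{m+n}x_{k}\to y$, lift to $z$ with $y=T^{n}z$, conclude $z\in\R(T^{m})+\mathcal{N}(T^{n})$), whereas you package it as a clean general sublemma; the content is the same.
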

\begin{proof} Let $(x_{k})_{k} \subset X$ be a sequence  such that    $T^{m+n}x_{k}$ $\longrightarrow$ $y \in X.$ Since  $\R(T^{n})$ is closed then  there exists $z\in X$ such that $y=T^{n}z$  and  the operator $\overline{T^{n}} \in L(X/\mathcal{N}(T^{n}))$ induced by $T^{n}$ is bounded below.  Hence   $d(T^{m}x_{k}-z, \mathcal{N}(T^{n}))$ $\underset{k \rightarrow \infty}{\longrightarrow}$ $0;$  where $d(a,A)$ is the distance between $a \in X$ and a subspace $A$ of $X.$   As by hypothesis  $\R(T^{m})+\mathcal{N}(T^{n})$ is closed    then  $z\in \R(T^{m})+\mathcal{N}(T^{n})$   and thus  $y \in \R(T^{m}).$  Consequently  $\R(T^{m+n})$ is closed.
 \end{proof}
\begin{prop}\label{prop4}
Let $T \in L(X).$ If there exists  $n\in \N$ such that $\min\{\alpha(T_{[n]}),\beta(T_{[n]})\}<\infty,$ then
$\R(T^n)$ is closed   if and only if   there exists   an integer $m \geq n+1$ such that $\R(T^m)$ is closed. Moreover, in this case  $T$  is a  semi-B-Fredholm.
\end{prop}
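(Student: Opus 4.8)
The plan is to rephrase the hypothesis as $m_T\le n$ and to argue separately according to which of the two decreasing sequences $(\alpha(T_{[k]}))_k$, $(\beta(T_{[k]}))_k$ is finite at the level $n$, i.e.\ the essential-descent case $\beta(T_{[n]})<\infty$ and the essential-ascent case $\alpha(T_{[n]})<\infty$. Throughout I would rely on two elementary facts: a bounded operator between Banach spaces whose range has finite codimension automatically has closed range, and the sum of a closed subspace with a finite-dimensional subspace is closed. Since the two sequences are decreasing, whichever of $\alpha(T_{[n]}),\beta(T_{[n]})$ is finite remains finite for every $k\ge n$, so the selected case applies uniformly at all levels $k\ge n$.

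I would dispatch the descent case first, as it is the routine one. For each $k\ge n$ the subspace $\R(T^{k+1})=T(\R(T^k))$ has codimension $\beta(T_{[k]})\le\beta(T_{[n]})<\infty$ in $\R(T^k)$. If $\R(T^k)$ is closed, then regarding $T_{[k]}$ as an operator on the Banach space $\R(T^k)$ and using the finite-codimension fact gives that $\R(T^{k+1})$ is closed; conversely, if $\R(T^{k+1})$ is closed then $\R(T^k)=\R(T^{k+1})+F_k$ with $\dim F_k=\beta(T_{[k]})<\infty$ is closed. Thus closedness of $\R(T^k)$ is the same condition for all $k\ge n$, which is exactly the asserted equivalence; moreover, descending to the level $d_{e}(T)\le n$ shows $\R(T^{d_{e}(T)})$ is closed, so $T$ is lower semi-B-Fredholm.

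The ascent case is where I expect the real difficulty, because closedness no longer propagates upward and the genuine content is the downward step: if $\R(T^{k+1})$ is closed and $\alpha(T_{[k]})=\dim(\NN(T)\cap\R(T^k))<\infty$, then $\R(T^k)$ is closed. Granting this step and applying it at the levels $k=m-1,\dots,n$ (legitimate since $\alpha(T_{[k]})<\infty$ for every $k\ge n$), a closed $\R(T^m)$ with $m\ge n+1$ forces $\R(T^n)$ to be closed; pushing the same descent down to $a_{e}(T)+1\le n+1$ then makes $T$ upper semi-B-Fredholm. The main obstacle is thus the downward step itself, which I would establish in one of two ways: by a finite-dimensional reduction exploiting that $\NN(T)\cap\R(T^k)$ is complemented in $X$, modifying $T_{[k]}$ into an essentially injective operator with the same (closed) range $\R(T^{k+1})$ and transferring completeness back to $\R(T^k)$; or, more cleanly, by duality, since $\R(S)$ is closed exactly when $\R(S^{*})$ is, and finite essential ascent for $T$ matches finite essential descent for $T^{*}$ on the levels where the relevant ranges are closed, so the ascent case reduces to the already-settled descent case for $T^{*}$. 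Finally, for assembling higher-order closedness in the reverse sense one can complement this with the previous Proposition, which propagates closedness upward from $\R(T^n)$ together with the closedness of the auxiliary sums $\R(T^m)+\NN(T^n)$; combining the descent and ascent analyses yields the biconditional and, in either case, the semi-B-Fredholm conclusion.
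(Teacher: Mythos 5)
Your treatment of the descent case $\beta(T_{[n]})<\infty$ is correct and complete, and in the ascent case $\alpha(T_{[n]})<\infty$ you have correctly isolated the downward step as the substantive point of the converse implication. But neither of your two proposed justifications of that step is yet a proof: the ``finite-dimensional reduction'' has to transfer completeness onto the not-yet-closed operator range $\R(T^{k})$, which is precisely the difficulty, and the duality route is circular, since identifying $\alpha(T_{[k]})=\dim(\NN(T)\cap\R(T^{k}))$ with a defect number of $T^{*}$ already presupposes the closedness of $\R(T^{k})$. The step does hold, and the efficient argument is: $\R(T^{k})+\NN(T)=T^{-1}(\R(T^{k+1}))$ is closed, $\R(T^{k})\cap\NN(T)$ is finite-dimensional hence closed, and $\R(T^{k})$ is an operator range, so Neubauer's lemma \cite[Proposition 2.1.1]{labrousse} gives that $\R(T^{k})$ is closed. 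This is in substance \cite[Lemma 12]{mbekhta}, which is all the paper invokes for the converse direction (after noting $k_{i}(T)\le\min\{\alpha(T_{[i]}),\beta(T_{[i]})\}<\infty$ for $i\ge n$); the paper makes no case distinction there.

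The genuine gap is the forward implication in the ascent case. You observe, correctly, that closedness does not propagate upward when only $\alpha(T_{[n]})<\infty$ is known, and your fallback via the first Proposition of Section 2 needs $\R(T^{m})+\NN(T^{n})$ to be closed, which Lemma \ref{lem4}(i) supplies only from a finite $\beta$. No argument can fill this, because the implication is false in the pure ascent case: take $X=L^{2}[0,1]\oplus L^{2}[0,1]$, let $V$ be the Volterra operator and set $T(x,y)=(Vx+y,0)$. Then $\R(T)=L^{2}[0,1]\oplus\{0\}$ is closed and $\alpha(T_{[1]})=\dim(\NN(T)\cap\R(T))=0$, while $\R(T^{m})=\R(V^{m-1})\oplus\{0\}$ is dense and non-closed for every $m\ge 2$; so with $n=1$ the hypothesis and the left-hand side hold and no $m\ge n+1$ exists. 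The paper's own proof of this direction has the same defect: it deduces that $T_{[n]}$ is semi-Fredholm from ``$\R(T^{n})$ closed and $\min\{\alpha(T_{[n]}),\beta(T_{[n]})\}<\infty$'', which is valid when $\beta(T_{[n]})<\infty$ (a finite-codimensional operator range is closed) but fails when only $\alpha(T_{[n]})<\infty$, as the example shows. What is true, and what the rest of the paper actually uses, is the converse together with the ``moreover'' clause: if some $\R(T^{m})$ with $m\ge n+1$ is closed, then $\R(T^{n})$ is closed and $T$ is semi-B-Fredholm.
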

\begin{proof}
Suppose that $\R(T^n)$ is closed, as  $\min\{\alpha(T_{[n]}),\beta(T_{[n]})\}<\infty$ then $T_{[n]}$ is semi-Fredholm operator, and so   $T_{[n]}^{m-n}$ is semi-Fredholm  for every integer $m\geq n.$ Thus $\R(T_{[n]}^{m-n})=\R(T^{m})$ is closed in $\R(T^{n}),$ and then   $\R(T^{m})$ is closed. Conversely,  we have $k_n(T) \leq \min\{\alpha(T_{[n]}),\beta(T_{[n]})\}<\infty;$ where   $k_n(T):=\mbox{dim}\frac{\mathcal{N}(T_{[n]})}{\mathcal{N}(T_{[n+1]})}.$ As it is already mentioned that  $(\alpha(T_{[k]}))_{k}$ and $(\beta(T_{[k]}))_{k}$ are  decreasing sequences,  then $k_i(T)<\infty$ for every $i \geq n.$    Let  $m$ be  an integer such that $m \geq n+1$ and $\R(T^{m})$ is closed, then \cite[Lemma 12]{mbekhta}  implies that   $\R(T^n)$ is closed. Thus $T_{[n]}$ is semi-Fredholm.
\end{proof}
For $T \in L(X),$ we denote by  $m_{T}$ the essential degree of $T$   defined     by    $m_T:=\rm{inf}\{n \in \N  :  \rm{inf}\{\alpha(T_{[n]}),\beta(T_{[n]})\}<\infty\},$ with $\rm{inf}\emptyset=\infty.$   Corollary \ref{cor00} below shows also that
$$m_{T}=\left\lbrace
\begin{array}{ll}
a_{e}(T) & \mbox{if $a_{e}(T)<\infty$}\\
d_{e}(T) & \mbox{if $d_{e}(T)<\infty$}\\
\infty &  \mbox{else}
\end{array}
\right.$$
where $a_{e}(T)$ and $d_{e}(T)$ are respectively, the essential ascent and the essential descent of $T.$
\begin{cor}\label{cor1}
If  $T \in L(X)$ is  a  semi-B-Fredholm operator  then $\R(T^{m_T})$ is closed and $m_T=\mbox{min}\{n \in \N: \R(T^n)  \text{ is closed and } T_{[n]}  \text{ is }  \text{ semi-Fredholm}\}.$
\end{cor}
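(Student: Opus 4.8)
The plan is to reduce the whole statement to Proposition \ref{prop4}, handling separately the two possibilities hidden in the hypothesis ``$T$ semi-B-Fredholm'', namely that $T$ is upper or lower semi-B-Fredholm. Throughout I shall use the two facts recorded in the introduction: $m_T=\inf\{a_e(T),d_e(T)\}$, and if both $a_e(T)$ and $d_e(T)$ are finite then they coincide. Together these give that $a_e(T)<\infty$ forces $m_T=a_e(T)$, and $d_e(T)<\infty$ forces $m_T=d_e(T)$; in either case $m_T<\infty$. I also record at the outset that $\min\{\alpha(T_{[m_T]}),\beta(T_{[m_T]})\}<\infty$: since the sequences $(\alpha(T_{[n]}))_n$ and $(\beta(T_{[n]}))_n$ are decreasing, the set defining $m_T$ is upward closed in $\N$, so its infimum is attained.

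The first substantial step is to show that $\R(T^{m_T})$ is closed. If $T$ is lower semi-B-Fredholm this is immediate, for then $m_T=d_e(T)$ and $\R(T^{d_e(T)})=\R(T^{m_T})$ is closed by definition. If $T$ is upper semi-B-Fredholm, then $m_T=a_e(T)$, the range $\R(T^{m_T+1})=\R(T^{a_e(T)+1})$ is closed by definition, and $\alpha(T_{[m_T]})<\infty$; applying the converse implication of Proposition \ref{prop4} with $n=m_T$ and $m=m_T+1\ge n+1$ (whose standing hypothesis $\min\{\alpha(T_{[m_T]}),\beta(T_{[m_T]})\}<\infty$ is met) then yields that $\R(T^{m_T})$ is closed.

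Now that $\R(T^{m_T})$ is closed and $\min\{\alpha(T_{[m_T]}),\beta(T_{[m_T]})\}<\infty$, the operator $T_{[m_T]}$ is semi-Fredholm --- this is exactly the configuration treated in the proof of Proposition \ref{prop4}. Hence $m_T$ lies in the set $S:=\{n\in\N:\R(T^n)\text{ is closed and }T_{[n]}\text{ is semi-Fredholm}\}$. To finish I would check that $m_T$ is a lower bound for $S$: if $n\in S$ then $T_{[n]}$ is semi-Fredholm, so $\min\{\alpha(T_{[n]}),\beta(T_{[n]})\}<\infty$, and the definition of $m_T$ as an infimum gives $m_T\le n$. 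Since $m_T$ is itself such an $n$, we conclude $m_T=\min S$, which is the asserted equality.

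I expect the only delicate point to be the closedness of $\R(T^{m_T})$ in the upper semi-B-Fredholm case, where the definition only furnishes closedness one step higher, at level $m_T+1$, and one must descend to level $m_T$. This descent is precisely what the converse half of Proposition \ref{prop4} (ultimately \cite[Lemma 12]{mbekhta}) provides; the remaining work is the bookkeeping that identifies $m_T$ with $a_e(T)$ in the upper case and with $d_e(T)$ in the lower case, so that the two defining ranges land exactly at levels $m_T+1$ and $m_T$ respectively.
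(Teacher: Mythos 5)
Your proposal is correct and follows essentially the same route as the paper: both reduce the closedness of $\R(T^{m_T})$ to Proposition \ref{prop4} (the paper notes that some $\R(T^{n})$ with $n\geq m_T$ is closed and applies the proposition uniformly, while you make the upper/lower case split explicit), and both conclude semi-Fredholmness of $T_{[m_T]}$ and the minimality of $m_T$ in the same way. The extra bookkeeping identifying $m_T$ with $a_e(T)$ or $d_e(T)$ is sound and consistent with Corollary \ref{cor00}.
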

\begin{proof}
 Since $T$ is a semi-B-Fredholm then $\mbox{min}\{\alpha(T_{[m_T]}),\beta(T_{[m_T]})\}<\infty.$   On the other hand, there exists   an integer $n \geq m_T$ such that $\R(T^n)$ is closed. Then by Proposition \ref{prop4},  $\R(T^{m_T})$ is closed and $T_{[m_T]}$   is a  semi-Fredholm. Thus  $m_T=\mbox{min}\{n: \R(T^n)  \text{ is closed and } T_{[n]}  \text{ is }  \text{ semi-Fredholm}\}.$
\end{proof}
According to   the properties  of the index of  semi-B-Fredholm operators   \cite[Proposition 2.1]{berkani-sarih}, we   immediately obtain the next corollary.
\begin{cor}\label{cor0} Let $T \in L(X).$ If there exist $n,m \in \N$ such that $\R(T^{n})$ is closed and   the operator $T_{[n]}$ is an  upper (resp., a lower) semi-Fredholm and $\beta(T_{[m]})<\infty$ (resp., $\alpha(T_{[m]})<\infty$), then $\R(T^{m})$ is closed,   $T_{[m]}$ and $T_{[n]}$  are  Fredholm. Consequently, if $T$ is a B-Fredholm operator then $m_T=\mbox{min}\{n: \R(T^n)  \text{ is closed and } T_{[n]}  \text{ is }  \text{Fredholm}\}=\mbox{min}\{n  :  \mbox{max}\{\alpha(T_{[n]}),\beta(T_{[n]})\}<\infty\}=a_{e}(T)=d_{e}(T).$
\end{cor}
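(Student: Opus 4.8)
The plan is to deduce everything from the earlier results together with the fact, recalled in the introduction, that an operator with both finite essential ascent and finite essential descent satisfies $m_T=a_e(T)=d_e(T)$. I treat the upper case, the lower one being entirely dual (obtained by symmetry of the hypotheses or by passing to $T^{*}$). First I would read off the two finiteness statements hidden in the hypotheses: since $T_{[n]}$ is upper semi-Fredholm we have $\alpha(T_{[n]})<\infty$, hence $a_e(T)\le n<\infty$; and since $\beta(T_{[m]})<\infty$ we have $d_e(T)\le m<\infty$. Thus $T$ has simultaneously finite essential ascent and finite essential descent, so $m_T=a_e(T)=d_e(T)$, and in particular $m_T\le n$ and $m_T\le m$.

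Next, because $a_e(T)=m_T$ and $d_e(T)=m_T$, both $\alpha(T_{[m_T]})<\infty$ and $\beta(T_{[m_T]})<\infty$ (each essential degree being the first index at which the corresponding decreasing sequence becomes finite). Since $T$ is then a semi-B-Fredholm operator, Corollary \ref{cor1} yields that $\R(T^{m_T})$ is closed, and combined with the two finiteness facts this makes $T_{[m_T]}$ a Fredholm operator. To propagate this to larger indices I would argue exactly as in the proof of Proposition \ref{prop4}: the Fredholm operator $T_{[m_T]}$ on $\R(T^{m_T})$ has $T_{[m_T]}^{\,j-m_T}$ Fredholm for every $j\ge m_T$, whence $\R(T_{[m_T]}^{\,j-m_T})=\R(T^{j})$ is closed; moreover, by the monotonicity of $(\alpha(T_{[k]}))_k$ and $(\beta(T_{[k]}))_k$ we get $\alpha(T_{[j]})\le\alpha(T_{[m_T]})<\infty$ and $\beta(T_{[j]})\le\beta(T_{[m_T]})<\infty$. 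Hence $T_{[j]}$ is Fredholm for every $j\ge m_T$. Applying this to $j=m$ and $j=n$ (both $\ge m_T$) gives that $\R(T^m)$ is closed and that $T_{[m]}$ and $T_{[n]}$ are Fredholm, which is the first assertion.

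For the ``Consequently'' part, if $T$ is B-Fredholm it is both upper and lower semi-B-Fredholm, so again $a_e(T),d_e(T)<\infty$ and $m_T=a_e(T)=d_e(T)$, with $\max\{\alpha(T_{[m_T]}),\beta(T_{[m_T]})\}<\infty$ and $\R(T^{m_T})$ closed by Corollary \ref{cor1}; thus $T_{[m_T]}$ is Fredholm and $m_T$ belongs to each of the two index sets appearing in the statement. Conversely, membership of some $n$ in either set forces $\min\{\alpha(T_{[n]}),\beta(T_{[n]})\}<\infty$, hence $m_T\le n$; taking the infimum shows that $m_T$ is the minimum of both sets, and the chain of equalities $m_T=\min\{\dots\}=\min\{\dots\}=a_e(T)=d_e(T)$ follows.

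The only genuinely delicate point is the very first one---extracting the simultaneous finiteness of the essential ascent and descent from the asymmetric hypotheses (upper semi-Fredholmness at $n$ versus finiteness of $\beta$ at $m$)---after which the identity $m_T=a_e(T)=d_e(T)$ does all the real work. The remaining propagation to indices $j\ge m_T$ is routine once the monotonicity of the kernel/cokernel dimensions and the semi-Fredholm behaviour of powers (as in Proposition \ref{prop4}) are invoked; this is essentially where the index properties of semi-B-Fredholm operators enter.
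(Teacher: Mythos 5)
Your argument is correct, but it runs along a genuinely different track from the paper's. The paper derives Corollary \ref{cor0} in one stroke from the index properties of semi-B-Fredholm operators in \cite[Proposition 2.1]{berkani-sarih}: once $\R(T^{n})$ is closed and $T_{[n]}$ is semi-Fredholm, the quantity $\mbox{ind}(T_{[k]})$ is constant for $k\geq n$, so the finiteness of $\beta(T_{[m]})$ forces the common index to be finite and hence transfers finiteness of $\alpha$ into finiteness of $\beta$ (and vice versa) at every admissible level. You instead bypass the index entirely: you extract $a_{e}(T)\leq n$ and $d_{e}(T)\leq m$ from the hypotheses, invoke the identity $m_{T}=a_{e}(T)=d_{e}(T)$ (Corollary \ref{cor00}, i.e.\ \cite[Lemma 11, p.204]{mullerbook}), obtain closedness of $\R(T^{m_{T}})$ via Proposition \ref{prop4} and Corollary \ref{cor1}, and then propagate Fredholmness upward to $j=n$ and $j=m$ by the power argument and monotonicity. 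This is logically sound and arguably more self-contained, with two caveats worth flagging. First, Corollary \ref{cor00} and Proposition \ref{prop2} appear \emph{after} Corollary \ref{cor0} in the paper; there is no actual circularity, since the proof of Proposition \ref{prop2} rests only on the Kaashoek identities and mentions Corollary \ref{cor0} merely as an alternative route, but you should cite the external source \cite{mullerbook} to keep the ordering clean. Second, your sentence ``Since $T$ is then a semi-B-Fredholm operator'' is slightly too quick: finiteness of $a_{e}(T)$ and $d_{e}(T)$ alone does not give semi-B-Fredholmness---one also needs a closed range at a suitable level---so you should say explicitly that Proposition \ref{prop4}, applied at level $n$ where $\R(T^{n})$ is closed and $\alpha(T_{[n]})<\infty$, is what certifies that $T$ is semi-B-Fredholm before Corollary \ref{cor1} can be used. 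With those two points made explicit, your proof is complete; what the paper's index-based route buys is brevity, while yours makes transparent exactly which finiteness and closedness facts are doing the work.
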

\begin{prop} Let $T \in L(X)$ and  let $n$  be a strictly positive integer.   Then $T$ is a semi-B-Fredholm operator if and only if  $T^n$ is a semi-B-Fredholm operator.  If this is the case then   $\mbox{ind}(T^n)=n .\mbox{ind}(T).$

\end{prop}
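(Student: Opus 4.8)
The plan is to reduce the statement, at a suitable iterate, to the classical fact that a bounded operator $S$ on a Banach space is upper (resp.\ lower) semi-Fredholm if and only if $S^{n}$ is, in which case $\mbox{ind}(S^{n})=n\,\mbox{ind}(S)$. The bridge between $T$ and $T^{n}$ is an elementary operator identity: since $\R(T^{k})$ is $T$-invariant and $\R((T^{n})^{\ell})=\R(T^{n\ell})$, for every $\ell\in\N$ one has, as operators acting on the \emph{same} Banach space $\R(T^{n\ell})$,
$$(T^{n})_{[\ell]}=\big(T_{[n\ell]}\big)^{n}.$$
First I would record this identity together with $\R((T^{n})^{\ell})=\R(T^{n\ell})$; this is the only genuine computation, after which each direction is an application of the semi-Fredholm power theorem to the restriction $T_{[n\ell]}$, viewed as a bounded operator on $\R(T^{n\ell})$ (which is closed under the relevant hypotheses). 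Note also that the power theorem preserves the upper/lower type, so it suffices to track one of the two defects throughout, and ``semi-B-Fredholm'' transfers in either case.

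For the implication $T\Rightarrow T^{n}$, suppose $T$ is semi-B-Fredholm. By Corollary \ref{cor1}, $\R(T^{m_T})$ is closed and $T_{[m_T]}$ is semi-Fredholm; since $(\alpha(T_{[k]}))_{k}$ and $(\beta(T_{[k]}))_{k}$ are decreasing and the ranges remain closed past $m_T$ (forward part of Proposition \ref{prop4}), $T_{[k]}$ is semi-Fredholm, of the same type as $T$, for every $k\geq m_T$. Choosing $\ell$ with $k:=n\ell\geq m_T$, the power theorem gives that $(T_{[k]})^{n}=(T^{n})_{[\ell]}$ is semi-Fredholm while $\R((T^{n})^{\ell})=\R(T^{k})$ is closed; hence $T^{n}$ is semi-B-Fredholm by Proposition \ref{prop4}.

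Conversely, if $T^{n}$ is semi-B-Fredholm, then by Corollary \ref{cor1} there is an $\ell$ (namely $\ell=m_{T^{n}}$) with $\R((T^{n})^{\ell})=\R(T^{n\ell})$ closed and $(T^{n})_{[\ell]}=(T_{[n\ell]})^{n}$ semi-Fredholm. Applying the power theorem to $S:=T_{[n\ell]}$ on the Banach space $\R(T^{n\ell})$ yields that $T_{[n\ell]}$ is semi-Fredholm, so $\min\{\alpha(T_{[n\ell]}),\beta(T_{[n\ell]})\}<\infty$; since $\R(T^{n\ell})$ is closed, Proposition \ref{prop4} again shows that $T$ is semi-B-Fredholm.

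Finally, for the index, assume both operators are semi-B-Fredholm and fix $\ell$ large enough that $\ell\geq m_{T^{n}}$ and $k:=n\ell\geq m_T$. Since the index of a semi-B-Fredholm operator is independent of the chosen degree $\geq m$ (\cite[Proposition 2.1]{berkani-sarih}, or Corollary \ref{cor1}), one gets $\mbox{ind}(T^{n})=\mbox{ind}((T^{n})_{[\ell]})=\mbox{ind}((T_{[k]})^{n})=n\,\mbox{ind}(T_{[k]})=n\,\mbox{ind}(T)$, the central equality being the power theorem on $\R(T^{k})$. The only genuinely delicate point is the nontrivial half of the classical power theorem, that semi-Fredholmness of $(T_{[n\ell]})^{n}$ forces closedness of $\R(T_{[n\ell]})$ and finiteness of the relevant defect of $T_{[n\ell]}$; this is standard, and everything else is the bookkeeping identity $(T^{n})_{[\ell]}=(T_{[n\ell]})^{n}$ together with a correct choice of iterate.
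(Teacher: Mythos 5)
Your argument is correct, and for the index formula it coincides with the paper's own computation: both hinge on the identity $(T^{n})_{[\ell]}=(T_{[n\ell]})^{n}$ on the closed subspace $\R(T^{n\ell})$, the classical multiplicativity of the index under powers of a semi-Fredholm operator, and \cite[Proposition 2.1]{berkani-sarih} to identify $\mbox{ind}((T^{n})_{[\ell]})$ with $\mbox{ind}(T^{n})$ and $\mbox{ind}(T_{[n\ell]})$ with $\mbox{ind}(T)$. Where you genuinely diverge is the equivalence itself: the paper disposes of it in one line by citing \cite[Proposition 4.2]{berkani-sarih} as a black box, whereas you derive it from the same restriction identity together with the two-way classical power theorem for semi-Fredholm operators and the paper's own Proposition \ref{prop4} and Corollary \ref{cor1}. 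This makes the proposition essentially self-contained (its only external input is the standard semi-Fredholm power theorem) and it correctly tracks the upper/lower type, so the statement transfers for each class separately; the price is the bookkeeping of choosing $\ell$ with $n\ell\geq m_{T}$ in one direction and, in the converse, verifying that $\R(T^{n m_{T^{n}}})$ is closed before applying the power theorem on it, which you do via Corollary \ref{cor1}. I see no gaps.
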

\begin{proof}
 From \cite[Proposition 4.2]{berkani-sarih}, it follows that $T$ is  semi-B-Fredholm  if and only if $T^n$ is  semi-B-Fredholm.  Suppose that $T$ is  semi-B-Fredholm then by \cite[Proposition 2.1]{berkani-sarih} $\R(T^{nm})$ is closed, $T_{[nm]}$ is semi-Fredholm and  $\mbox{ind}(T)=\mbox{ind}(T_{[m]})=\mbox{ind}(T_{[nm]});$ where   $m=m_{T}.$  As  $\R((T^{n})^m)=\R(T^{nm})$ is closed  and $(T^{n})_{[m]}=(T_{[nm]})^n$ is  semi-Fredholm, then $\mbox{ind}(T^n)=\mbox{ind}((T^{n})_{[m]})=\mbox{ind}((T_{[nm]})^n)=n.\mbox{ind}(T_{[nm]})=n.\mbox{ind}(T).$
\end{proof}
\begin{prop}\label{prop2}
Let $T\in L(X)$ and let $n \in \N.$\\
 (i) If $\alpha(T_{[n]}) < \infty$ and $\beta(T_{[n+1]}) < \infty$ then $\beta(T_{[n]}) < \infty.$\\
 (ii) If $\alpha(T_{[n+1]}) < \infty$ and $\beta(T_{[n]}) < \infty$ then $\alpha(T_{[n]}) < \infty.$
\end{prop}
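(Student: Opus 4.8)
The plan is to reduce both statements to a single elementary relation linking the sequences $(\alpha(T_{[n]}))_n$ and $(\beta(T_{[n]}))_n$ through the quantity $k_n(T)=\dim(\mathcal N(T_{[n]})/\mathcal N(T_{[n+1]}))$ already introduced in the proof of Proposition \ref{prop4}. Observe first that $\mathcal N(T_{[n]})=\mathcal N(T)\cap\R(T^n)$, so $\alpha(T_{[n]})=\dim(\mathcal N(T)\cap\R(T^n))$ and $\beta(T_{[n]})=\dim(\R(T^n)/\R(T^{n+1}))$. These are purely algebraic (co)dimensions, so no closedness of any range is needed; everything will be a bookkeeping of vector space dimensions.

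First I would introduce the linear map $\theta_n\colon \R(T^n)/\R(T^{n+1})\longrightarrow \R(T^{n+1})/\R(T^{n+2})$ induced by $T$, namely $\theta_n(x+\R(T^{n+1}))=Tx+\R(T^{n+2})$. It is well defined since $T(\R(T^{n+1}))\subset\R(T^{n+2})$, and it is onto since $\R(T^{n+1})=T(\R(T^n))$. The decisive step is the identification of its kernel: if $x\in\R(T^n)$ satisfies $Tx\in\R(T^{n+2})$, then writing $Tx=T^{n+2}z$ gives $x-T^{n+1}z\in\mathcal N(T)\cap\R(T^n)$, so that $x\in\R(T^{n+1})+(\mathcal N(T)\cap\R(T^n))$; the reverse inclusion is clear. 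Using the modular law (valid because $\R(T^{n+1})\subset\R(T^n)$) I would then conclude $\ker\theta_n=[\R(T^{n+1})+(\mathcal N(T)\cap\R(T^n))]/\R(T^{n+1})\cong (\mathcal N(T)\cap\R(T^n))/(\mathcal N(T)\cap\R(T^{n+1}))$, whence $\dim\ker\theta_n=k_n(T)$.

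From surjectivity of $\theta_n$, counting dimensions yields $\beta(T_{[n]})=k_n(T)+\beta(T_{[n+1]})$, while the short exact sequence $0\to\mathcal N(T)\cap\R(T^{n+1})\to\mathcal N(T)\cap\R(T^n)\to\ker\theta_n\to 0$ yields $\alpha(T_{[n]})=\alpha(T_{[n+1]})+k_n(T)$. Moreover, since $\ker\theta_n$ is both a subspace of the domain $\R(T^n)/\R(T^{n+1})$ and a quotient of $\mathcal N(T)\cap\R(T^n)$, one gets $k_n(T)\le\min\{\alpha(T_{[n]}),\beta(T_{[n]})\}$, exactly as recorded in Proposition \ref{prop4}.

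The two assertions then follow at once. For (i), the hypothesis $\alpha(T_{[n]})<\infty$ forces $k_n(T)\le\alpha(T_{[n]})<\infty$, and combined with $\beta(T_{[n+1]})<\infty$ the identity $\beta(T_{[n]})=k_n(T)+\beta(T_{[n+1]})$ gives $\beta(T_{[n]})<\infty$. For (ii), the hypothesis $\beta(T_{[n]})<\infty$ forces $k_n(T)\le\beta(T_{[n]})<\infty$, and combined with $\alpha(T_{[n+1]})<\infty$ the identity $\alpha(T_{[n]})=\alpha(T_{[n+1]})+k_n(T)$ gives $\alpha(T_{[n]})<\infty$. I expect the only genuinely delicate point to be the kernel computation for $\theta_n$, and precisely the correct application of the modular law to rewrite $\{x\in\R(T^n):Tx\in\R(T^{n+2})\}$ as $\R(T^{n+1})+(\mathcal N(T)\cap\R(T^n))$; once that identity is in place, both conclusions are immediate consequences of finiteness of $k_n(T)$.
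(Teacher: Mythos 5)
Your proof is correct and follows essentially the same route as the paper: the paper's argument consists precisely of the two identities $\alpha(T_{[n]})=\alpha(T_{[n+1]})+k_{n}(T)$ and $\beta(T_{[n]})=\beta(T_{[n+1]})+k_{n}(T)$ together with the bound $k_n(T)\le\min\{\alpha(T_{[n]}),\beta(T_{[n]})\}$, which you rederive in detail via the induced map $\theta_n$ and then exploit exactly as the paper does. The only difference is that you supply the dimension-counting proofs of these identities, which the paper takes as known.
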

\begin{proof}
Since   $\alpha(T_{[n]})=\alpha(T_{[n+1]})+k_{n}(T) \leq \alpha(T_{[n+1]}) + \beta(T_{[n]})$ and  $\beta(T_{[n]})=\beta(T_{[n+1]})+k_{n}(T) \leq \beta(T_{[n+1]}) + \alpha(T_{[n]}),$ then the proof is complete. Remark that for $n=0,$ the assertion (ii) is also an immediate  consequence of Corollary \ref{cor0}.
\end{proof}
As   an immediate consequence of Proposition \ref{prop2}, we find   \cite[Lemma 11, p.204]{mullerbook}.
\begin{cor}\label{cor00} If  $T \in L(X)$ and $\max\{a_{e}(T), d_{e}(T)\}<\infty$ then  $m_{T}=a_{e}(T)=d_{e}(T).$
\end{cor}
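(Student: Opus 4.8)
The plan is to deduce the equality $a_e(T)=d_e(T)$ directly from Proposition \ref{prop2}, and then conclude using the identity $m_T=\inf\{a_e(T),d_e(T)\}$ already recorded in the introduction. Write $a:=a_e(T)$ and $d:=d_e(T)$; by hypothesis both are finite, so it suffices to prove $a=d$.

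First I would set down the monotonicity facts that drive the argument: since $(\alpha(T_{[k]}))_k$ and $(\beta(T_{[k]}))_k$ are decreasing, once a term is finite all subsequent terms are finite; and by the very definition of $a_e(T)$ and $d_e(T)$ as infima, we have $\alpha(T_{[a-1]})=\infty$ whenever $a\geq 1$ and $\beta(T_{[d-1]})=\infty$ whenever $d\geq 1$.

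Next I would argue by contradiction, assuming $a\neq d$ and treating the two symmetric cases. Suppose first that $a<d$; then $d\geq 1$ and $d-1\geq a$, so monotonicity of $(\alpha(T_{[k]}))_k$ gives $\alpha(T_{[d-1]})<\infty$, while $\beta(T_{[d]})<\infty$ by definition of $d$. Applying Proposition \ref{prop2}(i) with $n=d-1$ then yields $\beta(T_{[d-1]})<\infty$, contradicting $\beta(T_{[d-1]})=\infty$. The case $a>d$ is handled symmetrically: here $a\geq 1$ and $a-1\geq d$ give $\beta(T_{[a-1]})<\infty$, together with $\alpha(T_{[a]})<\infty$, so Proposition \ref{prop2}(ii) with $n=a-1$ forces $\alpha(T_{[a-1]})<\infty$, again a contradiction. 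Hence $a=d$, and therefore $m_T=\inf\{a,d\}=a=d$.

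There is no serious obstacle here—this is an immediate consequence of Proposition \ref{prop2}, as the paper indicates. The only points requiring care are choosing the correct index at which to apply each implication ($n=d-1$ for part (i), $n=a-1$ for part (ii)) so that the hypotheses are met, and noting that the degenerate index shift is excluded in the relevant case, since $a<d$ forces $d\geq 1$ and $a>d$ forces $a\geq 1$.
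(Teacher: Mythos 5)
Your argument is correct and is precisely the intended one: the paper gives no explicit proof, stating only that the corollary is an immediate consequence of Proposition \ref{prop2}, and your case analysis (applying part (i) at $n=d_e(T)-1$ and part (ii) at $n=a_e(T)-1$, using monotonicity of the sequences and the definition of the infima) fills in that deduction exactly as intended. Nothing to add.
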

We denote by $F(X)$ (resp.,  $SF_+(X),$  $SF_-(X),$ $SBF_+(X),$  $SBF_-(X)$) the class of Fredholm (resp., upper semi-Fredholm, lower semi-Fredholm, upper semi-B-Fredholm, lower semi-B-Fredholm)  operators.
\begin{prop}\label{prop3}
Let $T\in L(X).$   The following  are equivalent.\\
 (i) $T$ is a Fredholm operator;\\
 (ii)  $\alpha(T)<\infty$ and $\beta(T_{[n]}) < \infty$ for some  integer $n;$\\
(iii) $\beta(T)<\infty$ and $\alpha(T_{[n]}) < \infty$   for some  integer $n.$\\
 Consequently, $F(X)=SF_+(X)\cap SBF_-(X)=SF_-(X)\cap SBF_+(X).$
\end{prop}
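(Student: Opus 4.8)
The plan is to establish the three-way equivalence through the two trivial implications (i)$\Rightarrow$(ii) and (i)$\Rightarrow$(iii) together with the two substantial converses (ii)$\Rightarrow$(i) and (iii)$\Rightarrow$(i), the second converse being the mirror image of the first. For the trivial directions, if $T$ is Fredholm then $\alpha(T)<\infty$ and $\beta(T)<\infty$, so both (ii) and (iii) hold with the choice $n=0$, since $T_{[0]}=T$.

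For (ii)$\Rightarrow$(i) I would argue as follows. From $\alpha(T)=\alpha(T_{[0]})<\infty$ and the fact that $(\alpha(T_{[k]}))_k$ is decreasing, every $\alpha(T_{[k]})$ is finite, so $a_{e}(T)=0$; and $\beta(T_{[n]})<\infty$ gives $d_{e}(T)\leq n<\infty$. Thus $\max\{a_{e}(T),d_{e}(T)\}<\infty$, and Corollary \ref{cor00} forces $d_{e}(T)=a_{e}(T)=0$, whence $\beta(T)=\beta(T_{[0]})<\infty$. (Alternatively one may bypass Corollary \ref{cor00} and propagate finiteness by hand: since all $\alpha(T_{[k]})$ are finite, a downward induction using Proposition \ref{prop2}(i), which turns $\alpha(T_{[k-1]})<\infty$ and $\beta(T_{[k]})<\infty$ into $\beta(T_{[k-1]})<\infty$, carries the hypothesis $\beta(T_{[n]})<\infty$ down to $\beta(T)<\infty$.) Having both $\alpha(T)<\infty$ and $\beta(T)<\infty$, the range $\R(T)$ has finite codimension, hence is closed, and $T$ is Fredholm. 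The implication (iii)$\Rightarrow$(i) is symmetric: $\beta(T)<\infty$ makes every $\beta(T_{[k]})$ finite, and Proposition \ref{prop2}(ii) (or again Corollary \ref{cor00}) propagates $\alpha(T_{[n]})<\infty$ down to $\alpha(T)<\infty$.

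For the concluding identities I would unpack the operator classes and invoke the equivalence just proved. If $T\in SF_+(X)\cap SBF_-(X)$, then $\alpha(T)<\infty$ from membership in $SF_+(X)$, while $d_{e}(T)<\infty$ from membership in $SBF_-(X)$ yields $\beta(T_{[d_{e}(T)]})<\infty$; this is exactly condition (ii), so $T$ is Fredholm. Conversely a Fredholm operator trivially lies in $SF_+(X)$ and, having $d_{e}(T)=0$ with $\R(T^0)=X$ closed, lies in $SBF_-(X)$; hence $F(X)=SF_+(X)\cap SBF_-(X)$, and the equality $F(X)=SF_-(X)\cap SBF_+(X)$ follows in the same way from (iii). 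I expect the only delicate point to be the converse (ii)$\Rightarrow$(i): the hypotheses do not presuppose that $\R(T)$ is closed, so the argument must first extract finiteness of both $\alpha(T)$ and $\beta(T)$ from the purely dimension-counting relations underlying Proposition \ref{prop2}, and only afterwards recover closedness of the range for free from its finite codimension.
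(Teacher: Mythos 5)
Your proposal is correct and follows essentially the same route as the paper: both directions (ii)$\Rightarrow$(i) and (iii)$\Rightarrow$(i) rest on the dimension-counting inequalities of Proposition \ref{prop2} (equivalently Corollary \ref{cor00}) to propagate finiteness of $\beta$ (resp.\ $\alpha$) down to the level $n=0$, with closedness of $\R(T)$ then coming for free from its finite codimension. The only cosmetic difference is that the paper phrases the descent as a one-step contradiction at the minimal index $p=d_{e}(T)$ rather than your explicit downward induction, and it leaves the concluding set identities implicit where you spell them out.
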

\begin{proof}
The implications ``(i) $\Longrightarrow$ (ii)" and  ``(i) $\Longrightarrow$ (iii)" are obvious.\\
(ii) $\Longrightarrow$ (i) We denote by  $p=d_{e}(T)$     and let us  prove that $p=0.$ Suppose to the contrary, that's $p \geq 1.$   We take  $S=T_{[p-1]},$ then $S_{[1]}=T_{[p]}$ and  so $\beta(S_{[1]})< \infty.$  On the other hand, since   $\alpha(T)<\infty$ then  $\alpha(S)<\infty.$        It follows from the assertion (i) of  Proposition \ref{prop2} that  $\beta(S) < \infty$   and this is a contradiction.\\
(iii) $\Longrightarrow$ (i)  It is an   immediate consequence of Corollary  \ref{cor0}.
\end{proof}
Let $n, m \in \N.$  Note that the space   $X \times \mathcal{N}(T^n)$ equipped with the norm defined by  $\|(x,y)\|=\|x\|+\|y\|$ for every $(x,y) \in X\times  \mathcal{N}(T^n),$   is a Banach space.   The  map $S_{n,m}$ : $X \times \mathcal{N}(T^n)$ $\longrightarrow$   $X$ defined by $S_{n,m}(x, y) = T^m x+y$  is a bounded linear operator.
\begin{lem}\label{lem4}
For every $T \in L(X),$ the following hold.\\
(i) If there exists $n \in \N$ such that    $\beta(T_{[n]})< \infty$ then $\R(T^m)+\mathcal{N}(T^k)$ is closed, for all  $m \in \N$ and $k \in \N$ such that $k \geq n.$\\
(ii) $\forall n \in \N$ and $\forall m \in \N^*,$ we have $T$ is a Fredholm if and only if $S_{n,m}$ is a Fredholm.
\end{lem}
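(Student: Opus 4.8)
The plan is to handle both parts through the single operator $S_{k,m}$ together with the elementary identity $\R(S_{k,m})=\R(T^m)+\mathcal{N}(T^k)$, reducing everything to a codimension computation and one standard Banach-space fact.

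For $(i)$, the observation I would record first is the purely algebraic isomorphism
$$X/(\R(T^m)+\mathcal{N}(T^k)) \cong \R(T^k)/\R(T^{m+k}),$$
induced by $x\mapsto T^k x+\R(T^{m+k})$. This map is visibly onto $\R(T^k)/\R(T^{m+k})$, and $T^k x\in\R(T^{m+k})$ holds exactly when $x\in\R(T^m)+\mathcal{N}(T^k)$, so its kernel is precisely $\R(T^m)+\mathcal{N}(T^k)$. I would then note $\dim\R(T^k)/\R(T^{m+k})=\sum_{i=k}^{m+k-1}\beta(T_{[i]})$ from the telescoping chain $\R(T^k)\supseteq\cdots\supseteq\R(T^{m+k})$. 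Since $k\geq n$ and $(\beta(T_{[i]}))_i$ is decreasing, every summand is $\leq\beta(T_{[n]})<\infty$, so $\R(T^m)+\mathcal{N}(T^k)$ has finite codimension in $X$. Being the range of the bounded operator $S_{k,m}$, it is then closed, because the range of a bounded operator between Banach spaces is automatically closed once it has finite codimension.

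For $(ii)$ I would first compute the two indices of $S_{n,m}$. The assignment $x\mapsto(x,-T^m x)$ identifies $\mathcal{N}(S_{n,m})$ with $\mathcal{N}(T^{m+n})$, giving $\alpha(S_{n,m})=\alpha(T^{m+n})$, while the isomorphism above (with $k=n$) gives $\beta(S_{n,m})=\dim\R(T^n)/\R(T^{m+n})=\sum_{i=n}^{m+n-1}\beta(T_{[i]})$. For the direct implication, if $T$ is Fredholm then $T^{m+n}$ is Fredholm, so $\alpha(S_{n,m})=\alpha(T^{m+n})<\infty$; moreover $\beta(T_{[i]})\leq\beta(T)<\infty$ makes $\beta(S_{n,m})$ finite, and $\R(S_{n,m})$ is closed by $(i)$ (whose hypothesis $\beta(T_{[0]})=\beta(T)<\infty$ holds), hence $S_{n,m}$ is Fredholm. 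For the converse, $\alpha(S_{n,m})=\alpha(T^{m+n})<\infty$ forces $\alpha(T)<\infty$ since $\mathcal{N}(T)\subseteq\mathcal{N}(T^{m+n})$, and $\beta(S_{n,m})<\infty$ forces $\beta(T_{[n]})<\infty$ (here $m\geq 1$ guarantees the sum is nonempty and contains the term $\beta(T_{[n]})$); Proposition \ref{prop3} then yields that $T$ is Fredholm.

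The only genuinely delicate point is the upgrade from finite codimension to closedness in $(i)$: one cannot conclude closedness of $\R(T^m)+\mathcal{N}(T^k)$ from finite codimension alone, and it is essential that this subspace is exhibited as the range of the bounded operator $S_{k,m}$, so that the standard fact applies. Everything else is a short computation with the telescoping chain, the explicit kernel/range descriptions of $S_{n,m}$, and a direct appeal to Proposition \ref{prop3}.
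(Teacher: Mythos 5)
Your proposal is correct and follows essentially the same route as the paper: both arguments hinge on identifying $\R(T^m)+\mathcal{N}(T^k)$ as the range of the bounded operator $S_{k,m}$, computing $\beta(S_{k,m})=\sum_{i=k}^{m+k-1}\beta(T_{[i]})<\infty$, and invoking the automatic closedness of finite-codimension ranges, with part (ii) reduced to Proposition \ref{prop3} exactly as in the paper. The only difference is cosmetic: you prove the quotient isomorphism $X/(\R(T^m)+\mathcal{N}(T^k))\cong\R(T^k)/\R(T^{m+k})$ inline, whereas the paper cites it from Kaashoek's Lemma 3.2.
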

\begin{proof} (i) Let  $n \in \N$ such that    $\beta(T_{[n]})< \infty$ and   $m\in \N.$
  The case of $m=0$ is trivial. Suppose that $m \neq 0,$ we have  $\R(S_{n,m}) = \R(T^m)+\mathcal{N}(T^n).$    From \cite[Lemma 3.2]{kaashoek} we have  $\beta(S_{n,m})=\displaystyle\sum_{i=n}^{n+m-1}\beta(T_{[i]}).$ As $\beta(T_{[n]})< \infty$ and $(\beta(T_{[k]}))_{k}$ is a  decreasing sequence then $\beta(S_{n,m})\leq m\beta(T_{[n]})< \infty,$ thus    $\R(T^m)+\mathcal{N}(T^n)$ is  closed.
Let  $k\in \N$ such that $k \geq n$ then   $\beta(T_{[k]})\leq \beta(T_{[n]})< \infty,$  hence $\R(T^m)+\mathcal{N}(T^k)$ is  closed.\\
(ii) Let  $n \in \N$ and $m\in \N^*.$  It is easily seen that   $\mathcal{N}(S_{n,m})\cong \mathcal{N}(T^{m+n}).$   We conclude from    Proposition \ref{prop3}  that
\begin{align*}
\alpha(T)<\infty \text{  and }  \beta(T)< \infty &\iff \alpha(T^{m+n})<\infty \text{  and } \beta(T_{[n]})< \infty\\
&\iff  \alpha(S_{n,m})<\infty \text{  and }  \beta(S_{n,m})< \infty.
\end{align*}
Hence $T$ is a Fredholm if and only if $S_{n,m}$ is a Fredholm.
\end{proof}
The next theorem shows that the condition ``$\R(T^{n})$ is closed" cited in  Corollary \ref{cor0} can be omitted.
\begin{thm}\label{thm0}
Let $T \in L(X).$
  If  there exist  $n,m \in \N$  such that $\alpha(T_{[n]})<\infty$ and $\beta(T_{[m]})<\infty,$ then $\R(T^{m_{T}})$ is closed and  $T_{[m_{T}]}$ is a Fredholm operator.
\end{thm}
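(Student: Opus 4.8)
The plan is to reduce the whole statement to a single closed-range assertion and then to prove that assertion by hand. First I would fix notation: the hypotheses give $a_e(T)<\infty$ (from $\alpha(T_{[n]})<\infty$) and $d_e(T)<\infty$ (from $\beta(T_{[m]})<\infty$), so Corollary \ref{cor00} applies and yields $m_T=a_e(T)=d_e(T)=:p$, together with $\alpha(T_{[p]})<\infty$ and $\beta(T_{[p]})<\infty$. With this in hand the Fredholmness of $T_{[p]}$ is automatic \emph{once} $\R(T^p)$ is known to be closed: then $\R(T^p)$ is a Banach space, $T_{[p]}$ is a bounded operator on it with $\alpha(T_{[p]})<\infty$ and $\beta(T_{[p]})<\infty$, and a finite-codimensional range is closed, so $T_{[p]}$ is Fredholm. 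Thus the entire difficulty is concentrated in showing that $\R(T^{m_T})=\R(T^p)$ is closed.

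To attack that, I would first record two facts about the (a priori non-closed) subspace $\R(T^p)$ and the closed subspace $\mathcal{N}(T^p)$. The first is that $\R(T^p)+\mathcal{N}(T^p)$ is closed, which is exactly Lemma \ref{lem4}(i) applied with $\beta(T_{[p]})<\infty$ (taking $m=k=p$). The second is that $\R(T^p)\cap\mathcal{N}(T^p)$ is finite-dimensional. For this I would filter the intersection by the subspaces $\R(T^p)\cap\mathcal{N}(T^j)$ for $0\le j\le p$ and observe that $T^{j-1}$ maps the $j$-th quotient injectively into $\mathcal{N}(T)\cap\R(T^{p+j-1})$, a space of dimension at most $\alpha(T_{[p]})<\infty$; summing the $p$ finite-dimensional quotients gives the claim.

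The crux is then to \emph{lift} these two facts to the closedness of $\R(T^p)$ itself, i.e.\ to show that $T^p$ is bounded below modulo $\mathcal{N}(T^p)$. From the first fact, the injection $X/\mathcal{N}(T^{2p})\to X/\mathcal{N}(T^p)$ induced by $T^p$ has closed range (namely the image of $\R(T^p)$), hence is bounded below; this gives an estimate $\|T^px\|\ge c\,d(x,\mathcal{N}(T^{2p}))$. Since I really need control modulo the \emph{smaller} space $\mathcal{N}(T^p)$, I would bridge the finite gap $\mathcal{N}(T^{2p})/\mathcal{N}(T^p)$ using the second fact: because $T^p(\mathcal{N}(T^{2p}))=\R(T^p)\cap\mathcal{N}(T^p)$ is finite-dimensional, one may pick a finite-dimensional $G$ with $\mathcal{N}(T^{2p})=\mathcal{N}(T^p)\oplus G$, on which $T^p$ is automatically bounded below; a routine gluing of the estimate on $G$ with the quotient estimate then produces $\|T^px\|\ge c'\,d(x,\mathcal{N}(T^p))$, so $\R(T^p)$ is closed.

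I expect this last lifting step to be the main obstacle, and it is worth stressing why it is not a formality: for an arbitrary subspace $M$, the conditions ``$M+\mathcal{N}(T^p)$ closed, $\mathcal{N}(T^p)$ closed, $M\cap\mathcal{N}(T^p)$ finite-dimensional'' do \emph{not} force $M$ to be closed (the graph of an everywhere-defined unbounded operator is a counterexample). What rescues the argument here is precisely that $M=\R(T^p)$ is an operator range, which is what makes the ``bounded below'' reformulation legitimate and lets the finite-dimensional gluing close the gap.
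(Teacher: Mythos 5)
Your proposal is correct, and it reaches the same two intermediate facts as the paper --- $\dim\bigl(\R(T^{m_T})\cap\mathcal{N}(T^{m_T})\bigr)<\infty$ and $\R(T^{m_T})+\mathcal{N}(T^{m_T})$ closed of finite codimension --- but it closes the argument differently. The paper gets the dimension counts from Kaashoek's isomorphisms $\frac{\mathcal{N}(T^{m_T+k})}{\mathcal{N}(T^{m_T})}\cong \mathcal{N}(T^k)\cap\R(T^{m_T})$ and $\frac{\R(T^{m_T})}{\R(T^{m_T+k})}\cong \frac{X}{\R(T^k)+\mathcal{N}(T^{m_T})}$; your filtration of $\R(T^p)\cap\mathcal{N}(T^j)$ is a hands-on version of the same computation and yields the identical bound $\sum_{k=p}^{2p-1}\alpha(T_{[k]})$. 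The real divergence is the last step: the paper invokes Neubauer's lemma (\cite[Proposition 2.1.1]{labrousse}) --- two paracomplete subspaces with closed sum and closed intersection are both closed --- treating $\R(T^{m_T})$ as a paracomplete (operator-range) subspace, whereas you prove closedness directly by showing $T^p$ is bounded below modulo $\mathcal{N}(T^p)$: the induced injection $X/\mathcal{N}(T^{2p})\to X/\mathcal{N}(T^p)$ has closed range because $\R(T^p)+\mathcal{N}(T^p)$ is closed, hence is bounded below, and the finite-dimensional complement $G$ of $\mathcal{N}(T^p)$ in $\mathcal{N}(T^{2p})$ (finite because $T^p(\mathcal{N}(T^{2p}))=\R(T^p)\cap\mathcal{N}(T^p)$) lets you upgrade $d(x,\mathcal{N}(T^{2p}))$ to $d(x,\mathcal{N}(T^p))$ by the standard perturbation estimate. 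Your gluing step does work (given a near-minimizer $z=n+g\in\mathcal{N}(T^p)\oplus G$ one bounds $\|g\|$ by $\|T^pg\|=\|T^pz\|\lesssim\|T^px\|$), so your route is self-contained and more elementary, at the cost of some $\varepsilon$-bookkeeping; the paper's route is shorter but leans on the theory of paracomplete subspaces. Your closing remark correctly identifies why the operator-range structure of $\R(T^p)$ is indispensable --- it is exactly the paracompleteness hypothesis in Neubauer's lemma, and exactly what legitimizes the bounded-below reformulation.
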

\begin{proof}
Let $n,m \in \N$  such that $\alpha(T_{[n]})<\infty$ and $\beta(T_{[m]})<\infty.$ From Corollary \ref{cor00} we have $\mbox{max}\{\alpha(T_{[m_{T}]}), \beta(T_{[m_{T}]})\}<\infty.$  The case of  $m_{T}=0$ is clear. Suppose that $m_{T} \geq 1,$  from  \cite[Lemma 3.1, Lemma 3.2]{kaashoek} we have $\frac{\mathcal{N}(T^{m_{T}+k})}{\mathcal{N}(T^{m_{T}})}\cong \mathcal{N}(T^k)\cap \R(T^{m_{T}})$  and $\frac{\R(T^{m_{T}})}{\R(T^{m_{T}+k})}\cong \frac{X}{\R(T^k)+\mathcal{N}(T^{m_{T}})},$ $\forall k \in \N.$ Hence $\dim(\mathcal{N}(T^{m_{T}})\cap \R(T^{m_{T}}))=\displaystyle\sum_{k=m_{T}}^{2m_{T}-1}\alpha(T_{[k]})$ and $\mbox{codim}(\R(T^{m_{T}})+\mathcal{N}(T^{m_{T}}))=\displaystyle\sum_{k=m_{T}}^{2m_{T}-1}\beta(T_{[k]}).$ Since   $(\alpha(T_{[n]}))_{n}$ and $(\beta(T_{[n]}))_{n}$ are decreasing sequences, then $\dim(\mathcal{N}(T^{m_{T}})\cap \R(T^{m_{T}}))< \infty$ and $\mbox{codim}(\R(T^{m_{T}})+\mathcal{N}(T^{m_{T}}))< \infty.$ So   $\mathcal{N}(T^{m_{T}})\cap \R(T^{m_{T}})$ is closed and by Lemma $\ref{lem4},$ $\R(T^{m_{T}})+\mathcal{N}(T^{m_{T}})$ is   closed. Since  $\R(T^{m_{T}})$  is a  paracomplete space,  then it follows from Neubauer Lemma \cite[Proposition 2.1.1]{labrousse} that  $\R(T^{m_{T}})$ is closed. Consequently, $T$ is a B-Fredholm operator.
\end{proof}
From the proof of Theorem \ref{thm0}, it follows that $T \in L(X)$ is B-Fredholm if and only if  $\dim(\mathcal{N}(T^{n})\cap \R(T^{n}))< \infty$ and $\mbox{codim}(\mathcal{N}(T^{n})+\R(T^{n}))< \infty$ for some integer $n\in \N.$
\begin{dfn} We say that $T \in L(X)$ is quasi upper semi-B-Fredholm (resp., quasi  lower semi-B-Fredholm) operator  if there exists $(M,N)\in Red(T)$ such that    $T_{M}$ is upper semi-Fredholm (resp., lower semi-Fredholm)   and $T_{N}$ is  nilpotent.  If $T$ is  quasi upper semi-B-Fredholm  or (resp., and)  quasi lower semi-B-Fredholm then $T$ is called quasi semi-B-Fredholm (resp., quasi  B-Fredholm).
\end{dfn}
Every nilpotent operator and every semi-Fredholm operator are   quasi semi-B-Fredholm. Moreover, every quasi semi-B-Fredholm operator  is   pseudo semi-B-Fredholm. For more details about pseudo semi-B-Fredholm operators, we refer the reader to   \cite{tajmouati}.
\begin{prop}\label{propquasi}  If $T \in L(X)$ is quasi upper semi-B-Fredholm (resp., quasi  lower semi-B-Fredholm) operator   then  $T$  is an upper semi-B-Fredholm (resp.,  a lower semi-B-Fredholm).
\end{prop}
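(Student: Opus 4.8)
The plan is to reduce everything to the $T_M$-component by using the nilpotency of $T_N$. Fix $(M,N)\in Red(T)$ with $T_M$ upper semi-Fredholm and $T_N$ nilpotent, and let $d\in\N$ be minimal with $T_N^{d}=0.$ Since $M$ and $N$ are closed, $T$-invariant and complementary, one has $T^{k}=T_M^{k}\oplus T_N^{k}$ for every $k,$ and consequently the two structural identities
$$\R(T^{k})=\R(T_M^{k})\oplus\R(T_N^{k}),\qquad \mathcal{N}(T)\cap\R(T^{k})=\big(\mathcal{N}(T_M)\cap\R(T_M^{k})\big)\oplus\big(\mathcal{N}(T_N)\cap\R(T_N^{k})\big).$$
In particular $\alpha(T_{[k]})=\alpha((T_M)_{[k]})+\alpha((T_N)_{[k]}).$

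Then I would bound the essential ascent. For $k\geq d$ we have $\R(T_N^{k})=\{0\},$ hence $\alpha((T_N)_{[k]})=0$ and therefore $\alpha(T_{[k]})=\alpha((T_M)_{[k]})\leq\alpha(T_M)<\infty,$ the last inequality coming from $\mathcal{N}((T_M)_{[k]})=\mathcal{N}(T_M)\cap\R(T_M^{k})\subset\mathcal{N}(T_M).$ This already gives $a_{e}(T)\leq d<\infty.$

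Next I would produce closed high powers. Again for $k\geq d$ the identity above yields $\R(T^{k})=\R(T_M^{k});$ since $T_M$ is upper semi-Fredholm, so is $T_M^{k},$ so $\R(T_M^{k})$ is closed in $M$ and hence in $X.$ Thus $\R(T^{k})$ is closed for all $k\geq d.$ To finish, set $n:=a_{e}(T),$ so that $\min\{\alpha(T_{[n]}),\beta(T_{[n]})\}\leq\alpha(T_{[n]})<\infty,$ and note that $m:=\max\{d,n+1\}\geq n+1$ is an integer with $\R(T^{m})$ closed. Proposition \ref{prop4} then forces $\R(T^{n})$ to be closed and $T_{[n]}$ to be semi-Fredholm; being of finite kernel it is upper semi-Fredholm, and exactly as in the proof of that proposition $\R(T^{\ell})$ is closed for every $\ell\geq n,$ in particular $\R(T^{a_{e}(T)+1})$ is closed. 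Together with $a_{e}(T)<\infty$ this is the definition of $T\in SBF_{+}(X).$ The lower case is completely dual: work with $\beta(T_{[k]})=\beta((T_M)_{[k]})+\beta((T_N)_{[k]})$ and the fact that $(\beta(T_{[k]}))_{k}$ decreases to obtain $d_{e}(T)\leq d<\infty,$ and conclude via Proposition \ref{prop4} that $\R(T^{d_{e}(T)})$ is closed, i.e. $T\in SBF_{-}(X).$

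I expect the only genuine obstacle to be conceptual rather than computational: because $N$ may be infinite-dimensional, $T_N$ cannot be handled as a finite-rank or compact perturbation, so the argument must rest purely on nilpotency through the vanishing $\R(T_N^{k})=\{0\}$ for $k\geq d.$ It is precisely this vanishing that collapses both the kernel count and the range-closedness onto the semi-Fredholm part $T_M.$ Deriving the two direct-sum identities above from $(M,N)\in Red(T)$ is routine but carries the whole weight of the proof, and care is needed to invoke Proposition \ref{prop4} with the correct index $n=a_{e}(T)$ (resp. $n=d_{e}(T)$) rather than with $d,$ since the definition of upper (resp. lower) semi-B-Fredholm requires closedness of the range at the essential-ascent (resp. essential-descent) power.
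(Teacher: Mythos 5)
Your proof is correct, but it takes a genuinely different route from the paper's. The paper does not work with the given pair $(M,N)$ directly: it first refines the decomposition by applying Corollary \ref{corkato} (a forward reference to a consequence of the main Theorem \ref{thmm}) to the semi-Fredholm summand $T_M$, obtaining $(A,B)\in Red(T_M)$ with $T_A$ semi-regular and $T_B$ nilpotent, so that $(A,\,B\oplus N)\in Red(T)$ has a nilpotent second summand of degree $d=\mbox{dis}(T)$; it then identifies $\R(T^{d})=\R(T_A^{d})$, $\alpha(T_{[d]})=\alpha(T_A)$ and $\beta(T_{[d]})=\beta(T_A)$ to conclude that $\R(T^{d})$ is closed and $T_{[d]}$ is semi-Fredholm (indeed semi-regular). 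Your argument stays with the original pair, uses the elementary direct-sum identities $\R(T^{k})=\R(T_M^{k})\oplus\R(T_N^{k})$ and $\mathcal{N}(T_{[k]})=\mathcal{N}((T_M)_{[k]})\oplus\mathcal{N}((T_N)_{[k]})$, lets nilpotency annihilate the $N$-component for $k\geq d$, and then invokes Proposition \ref{prop4} to transport closedness down to the power required by the definition. What your approach buys is self-containedness and logical economy: it avoids the forward reference to Corollary \ref{corkato}, whose proof rests on the whole machinery of Section~3, and it is explicit about verifying range-closedness at exactly $a_{e}(T)+1$ (resp.\ $d_{e}(T)$), which the paper's proof leaves implicit. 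What the paper's approach buys is the stronger conclusion that $T_{[d]}$ is semi-regular, not merely semi-Fredholm, information that is reused in later arguments. Both proofs are valid.
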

\begin{proof}  Let $(M, N) \in Red(T)$ such that $T_{M}$ is semi-Fredholm and $T_{N}$ is nilpotent.  From Corollary \ref{corkato} below     there exists $(A, B) \in Red(T_{M})$ such that $T_{A}$ is semi-Fredholm and semi-regular  and $T_{B}$ is nilpotent. So $(A, B\oplus N) \in Red(T)$ and $T_{B\oplus N}$ is nilpotent of degree $d.$ Hence $\R(T^{d})=\R(T^{d}_{A})$ and  $\mathcal{N}(T_{A})=\mathcal{N}(T_{[d]})$ and $T(A)\oplus (B\oplus N)=\mathcal{N}(T^{d})+\R(T).$ Therefore $\alpha(T_{A})=\alpha(T_{[d]})$ and $\beta(T_{A})=\beta(T_{[d]}).$  So  $\R(T^{d})$ is closed  and $T_{[d]}$ is semi-Fredholm. Thus $T$ is   semi-B-Fredholm.
\end{proof}
The converse of Proposition \ref{propquasi} is true if $X$ is a Hilbert space, see \cite[Theorem 2.6]{berkani-sarih}.
\vspace{0.2cm}
\par \noindent{\bf Open question:} Does there exist   a  semi-B-Fredholm  operator $T$  acting on a Banach space $X$  which is   not quasi semi-B-Fredholm?

\vspace{0.2cm}
We recall that  $T \in L(X)$ is Drazin invertible if $T$ is a  direct sum of an invertible operator and  a nilpotent operator, and $T$  is meromorphic if $T-\lambda I$ is Drazin invertible for every $\lambda \in \C\setminus\{0\}.$  Note that every nilpotent operator is meromorphic.
\begin{dfn}\cite{rwassa} $T \in L(X)$ is generalized Drazin-meromorphic semi-Fredholm  operator if $T=T_{1}\oplus T_{2};$ where $T_{1}$ is semi-Fredholm and $T_{2}$ is meromorphic.
\end{dfn}
Hereafter,  we denote  by  $\sigma_{se}(T),$  $\sigma_{usf}(T),$  $\sigma_{lsf}(T),$  $\sigma_{e}(T),$  $\sigma_{sf}(T),$ $\sigma_{sbf}(T)$ and  $\sigma_{d}(T)$  respectively, the semi-regular spectrum, the upper  semi-Fredholm spectrum, the lower semi-Fredholm spectrum, the essential  spectrum, the semi-Fredholm spectrum, the  semi-B-Fredholm spectrum and the  Drazin invertible  spectrum of  $T \in L(X).$ We also denote  by $A^{C}$ the complementary of a given complex subset $A.$
\begin{lem}\label{thmindex}
Let $T \in L(X)$ such that  there exist $(M,N),(M^{'},N^{'})\in Red(T)$ with $T_{M}$ and $T_{M^{'}}$ are semi-Fredholm, $T_{N}$ and $T_{N^{'}}$ are meromorphic. Then  $\mbox{ind}(T_{M})=\mbox{ind}(T_{M^{'}}).$
\end{lem}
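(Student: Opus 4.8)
The plan is to reduce the desired equality to a statement about a single perturbation $T-\lambda I$ for one suitably chosen scalar $\lambda\neq 0$, by combining the stability of the semi-Fredholm index under small perturbations with the spectral structure of meromorphic operators. The starting observation is that if $S$ is meromorphic then $S-\mu$ is Drazin invertible for every $\mu\neq 0$, so each nonzero point of $\sigma(S)$ is either in the resolvent set or an isolated pole of the resolvent. Hence $\sigma(S)\setminus\{0\}$ consists of isolated points, the spectrum is at most countable, and its only possible accumulation point is $0$. Applying this to both $T_{N}$ and $T_{N^{'}}$, the set $\sigma(T_{N})\cup\sigma(T_{N^{'}})$ meets every annulus $\{0<|\lambda|\le r\}$ in a set with no accumulation point away from $0$, so I can find $\lambda\neq 0$ of arbitrarily small modulus for which both $T_{N}-\lambda$ and $T_{N^{'}}-\lambda$ are invertible.

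Next I would invoke the classical stability (punctured-neighbourhood) theorem for the semi-Fredholm index: since $T_{M}$ and $T_{M^{'}}$ are semi-Fredholm, there is $\varepsilon>0$ such that for all $0<|\lambda|<\varepsilon$ the operators $T_{M}-\lambda$ and $T_{M^{'}}-\lambda$ remain semi-Fredholm with $\mbox{ind}(T_{M}-\lambda)=\mbox{ind}(T_{M})$ and $\mbox{ind}(T_{M^{'}}-\lambda)=\mbox{ind}(T_{M^{'}})$. Choosing the scalar $\lambda$ from the first step with the additional requirement $|\lambda|<\varepsilon$ arranges all four conditions simultaneously.

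The final step exploits that $T-\lambda$ respects each decomposition, writing $T-\lambda=(T_{M}-\lambda)\oplus(T_{N}-\lambda)$ on $X=M\oplus N$. Because $T_{N}-\lambda$ is invertible, a direct check gives $\mathcal{N}(T-\lambda)=\mathcal{N}(T_{M}-\lambda)$ and $\R(T-\lambda)=\R(T_{M}-\lambda)\oplus N$, whence $T-\lambda$ is semi-Fredholm with $\mbox{ind}(T-\lambda)=\mbox{ind}(T_{M}-\lambda)=\mbox{ind}(T_{M})$. Performing the identical computation for $X=M^{'}\oplus N^{'}$ yields $\mbox{ind}(T-\lambda)=\mbox{ind}(T_{M^{'}})$, and comparing the two expressions gives $\mbox{ind}(T_{M})=\mbox{ind}(T_{M^{'}})$.

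The main obstacle is confined to the first step: one must establish that a meromorphic operator has spectrum clustering only at $0$, so that a \emph{single} scalar $\lambda$ can be selected that makes both meromorphic parts invertible while staying inside the common perturbation radius of the two semi-Fredholm parts. Once such a $\lambda$ is fixed, the remaining arguments are elementary invariance properties of the kernel, range, and index under a direct sum with an invertible operator.
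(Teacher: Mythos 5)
Your argument is correct, and it follows the same broad strategy as the paper (compute $\mbox{ind}(T-\lambda I)$ in two ways for a small $\lambda\neq 0$, using the punctured neighbourhood theorem on the semi-Fredholm parts), but it handles the meromorphic summands differently. The paper takes an \emph{arbitrary} $\lambda$ in the punctured disc: there $T_{N}-\lambda I$ and $T_{N'}-\lambda I$ are merely Drazin invertible, so $T-\lambda I$ is only semi-B-Fredholm, and the conclusion rests on the additivity of the semi-B-Fredholm index over direct sums together with the fact that a Drazin invertible operator has index zero. You instead use the spectral picture of meromorphic operators --- every nonzero spectral point is an isolated pole, so $\sigma(T_{N})\cup\sigma(T_{N'})$ can accumulate only at $0$ --- to select a \emph{specific} $\lambda$ making both meromorphic parts invertible. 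This reduces everything to the classical semi-Fredholm index and the elementary identities $\mathcal{N}(T-\lambda I)=\mathcal{N}(T_{M}-\lambda I)$ and $\mathcal{R}(T-\lambda I)=\mathcal{R}(T_{M}-\lambda I)\oplus N$, at the cost of the (standard but not free) spectral argument in your first step. The trade-off is genuine: your route avoids invoking the well-definedness and additivity of the index for semi-B-Fredholm direct sums, while the paper's route avoids any discussion of where the spectra of $T_{N}$ and $T_{N'}$ cluster. Both proofs are complete and valid.
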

\begin{proof}
Since $T_{M}$ and  $T_{M^{'}}$ are  semi-Fredholm operators   then from punctured neighborhood theorem for semi-Fredholm operators, there  exists $\epsilon >0$ such that $B(0, \epsilon) \subset \sigma_{sf}(T_{M})^C\cap \sigma_{sf}(T_{M^{'}})^C,$  $\mbox{ind}(T_{M}- \lambda I) =\mbox{ind}(T_{M})$ and   $\mbox{ind}(T_{M^{'}}- \lambda I) =\mbox{ind}(T_{M^{'}})$ for every $\lambda \in B(0, \epsilon).$   As
$T_{N}$ and  $T_{N^{'}}$ are  meromorphic then $B_{0}:=B(0, \epsilon)\setminus\{0\}\subset  \sigma_{sf}(T_{M})^C\cap \sigma_{sf}(T_{M^{'}})^C\cap  \sigma_{d}(T_{N})^C\cap \sigma_{d}(T_{N^{'}})^C\subset\sigma_{sbf}(T)^C.$  Let $\lambda \in B_{0},$  then $T-\lambda I$ is semi-B-Fredholm and      $\mbox{ind}(T- \lambda I)=\mbox{ind}(T_{M}- \lambda I)+\mbox{ind}(T_{N} - \lambda I)=\mbox{ind}(T_{M^{'}}- \lambda I)+\mbox{ind}(T_{N^{'}} - \lambda I).$ Thus  $\mbox{ind}(T_{M})=\mbox{ind}(T_{M^{'}}),$ since the index of a Drazin invertible operator  is equal to zero.
\end{proof}
The previous lemma gives meaning to the following definition.
\begin{dfn} Let $T=T_{1}\oplus T_{2}$  be a generalized Drazin-meromorphic semi-Fredholm  operator; where $T_{1}$ is semi-Fredholm and $T_{2}$ is meromorphic. Then we define the  index of $T$ as the usual   index of the semi-Fredholm  operator  $T_{1}.$ In particular, if $T$ is semi-Fredholm then we find its  usual index.
\end{dfn}
In the following  corollary,  we extend  a particular  case of  \cite[Theorem 2.4]{berkani-castro}    to the general case of  Banach spaces.
\begin{cor}\label{corrr} $T \in L(X)$ is quasi B-Fredholm if and only if $T$ is B-Fredholm. Moreover, if this  is the case then  the index of $T$ as a quasi B-Fredholm coincides with its usual index as a B-Fredholm.
\end{cor}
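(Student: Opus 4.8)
The plan is to prove the equivalence in its two directions and then read off the index. The implication ``quasi B-Fredholm $\Rightarrow$ B-Fredholm'' needs no construction: by definition such a $T$ is at once quasi upper and quasi lower semi-B-Fredholm, so applying Proposition~\ref{propquasi} in each of its two forms yields $T\in SBF_+(X)$ and $T\in SBF_-(X)$, that is, $T$ is B-Fredholm.

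For the converse I would fix a B-Fredholm $T$ and set $p:=m_T$. Corollary~\ref{cor0} then tells me that $\R(T^p)$ is closed, $T_{[p]}$ is Fredholm and $\max\{\alpha(T_{[p]}),\beta(T_{[p]})\}<\infty$; equivalently, by the remark following Theorem~\ref{thm0}, the overlap $\mathcal{N}(T^p)\cap\R(T^p)$ is finite-dimensional and the sum $\mathcal{N}(T^p)+\R(T^p)$ has finite codimension, this sum being closed by Lemma~\ref{lem4}(i). The target is a pair $(M,N)\in Red(T)$ with $T_M$ Fredholm and $T_N$ nilpotent; the natural candidates are $M=\R(T^p)$, on which $T$ acts as the Fredholm operator $T_{[p]}$, and $N=\mathcal{N}(T^p)$, on which $T$ is nilpotent of degree at most $p$. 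These are closed and $T$-invariant, and they fail to reduce $T$ only by the two finite defects just recorded.

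The heart of the proof, and the step I expect to be the main obstacle, is to remove these two defects while preserving $T$-invariance, so as to turn $(\R(T^p),\mathcal{N}(T^p))$ into a genuine reducing pair. This is exactly where the two-sided hypothesis is essential: in an arbitrary Banach space a subspace can be complemented once it is finite-dimensional or closed of finite codimension, and B-Fredholmness supplies finiteness at both ends simultaneously --- the finite-dimensional overlap governed by $\alpha(T_{[p]})<\infty$ and the finite-codimensional gap governed by $\beta(T_{[p]})<\infty$. I would absorb the overlap $\mathcal{N}(T^p)\cap\R(T^p)$ into the nilpotent summand and adjoin a finite-dimensional complement of $\mathcal{N}(T^p)+\R(T^p)$, the correction being dictated by the action of $T$ on the finite-dimensional quotient $X/(\mathcal{N}(T^p)+\R(T^p))$, so as to produce closed $T$-invariant subspaces $M,N$ with $X=M\oplus N$, $T_N$ nilpotent and $T_M$ a finite-dimensional perturbation of $T_{[p]}$, hence Fredholm; as a Fredholm operator $T_M$ is simultaneously upper and lower semi-Fredholm, so this single pair exhibits $T$ as quasi B-Fredholm. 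Checking that these adjustments really can be performed $T$-invariantly and leave $T_M$ Fredholm is the only delicate point; I would also note that the same scheme collapses when merely one of $\alpha(T_{[p]})$, $\beta(T_{[p]})$ is finite, the required complement then being of infinite codimension and not available outside Hilbert space --- which fits with the corresponding semi-B-Fredholm question remaining open.

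Finally, to match the indices I would use the reducing pair just built. Since $T_N$ is nilpotent on a finite-dimensional space, $\alpha(T_N)=\beta(T_N)$ and $\mathrm{ind}(T_N)=0$; the additivity of the index over a reducing pair (the same additivity exploited in the proof of Lemma~\ref{thmindex}) then gives $\mathrm{ind}(T)=\mathrm{ind}(T_M)+\mathrm{ind}(T_N)=\mathrm{ind}(T_M)$. The left-hand side is the usual B-Fredholm index $\alpha(T_{[p]})-\beta(T_{[p]})$, while $\mathrm{ind}(T_M)$ is, by definition, the index of $T$ read off from its quasi B-Fredholm decomposition; that this value does not depend on the decomposition chosen is precisely Lemma~\ref{thmindex} with the meromorphic summand specialised to a nilpotent one. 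Hence the two indices agree, completing the proof.
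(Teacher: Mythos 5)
Your first direction is fine, and in fact cleaner than the paper's: a quasi B-Fredholm operator is by definition both quasi upper and quasi lower semi-B-Fredholm, so two applications of Proposition~\ref{propquasi} give $T\in SBF_+(X)\cap SBF_-(X)$, i.e.\ $T$ is B-Fredholm, whereas the paper reaches the same conclusion by comparing the indices of the two decompositions via Lemma~\ref{thmindex} and then invoking \cite[Theorem 7]{muller}. The genuine gap is in the converse. What you need there is precisely the statement that a B-Fredholm operator splits as (Fredholm) $\oplus$ (nilpotent), and the paper does not prove this: it quotes it as \cite[Theorem 7]{muller}, M\"uller's Kato-type decomposition of B-Fredholm operators. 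Your plan --- start from the non-reducing pair $(\R(T^p),\mathcal{N}(T^p))$ and repair the finite-dimensional overlap and the finite-codimensional gap ``$T$-invariantly'' --- is exactly the hard content of that theorem, and the sentence in which you defer it (``checking that these adjustments really can be performed $T$-invariantly \dots is the only delicate point'') is the whole proof. The difficulty is not complementation, which as you note is automatic for finite-dimensional and finite-codimensional subspaces, but $T$-invariance: one cannot simply delete $\mathcal{N}(T^p)\cap\R(T^p)$ from one summand or graft an arbitrary complement of $\mathcal{N}(T^p)+\R(T^p)$ onto the other and remain $T$-invariant. The known proofs (Kato, Labrousse, M\"uller) build the correcting subspaces from chains $y,Ty,\dots$ and dual chains $f,T^{*}f,\dots$ by an induction of exactly the sort carried out in Lemmas~\ref{lemm1}--\ref{lemm3} and Theorem~\ref{thmm} of this paper. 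So either cite \cite[Theorem 7]{muller} at this point, as the paper does, or supply that construction; as written, the step is missing.

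Two smaller points. In the index paragraph you assert that $T_N$ is nilpotent \emph{on a finite-dimensional space}; that is false in general (take $T=0\oplus I$ on $\ell^2\oplus\ell^2$: $T$ is B-Fredholm but $\mathcal{N}(T)$ is infinite-dimensional), although the conclusion $\mbox{ind}(T_N)=0$ survives because a nilpotent operator is Drazin invertible of index zero. Also, additivity of the \emph{B-Fredholm} index over a pair in $Red(T)$ at $\lambda=0$ is not the statement used inside Lemma~\ref{thmindex}, which applies ordinary Fredholm additivity only at $\lambda\neq 0$; it is true, but it needs either a reference or the perturbation argument the paper actually uses, namely \cite[Corollary 4.7]{grabiner} to identify $\mbox{ind}(T_{[m_T]})$ with $\mbox{ind}(T-\lambda I)=\mbox{ind}(T_1-\lambda I)=\mbox{ind}(T_1)$ for small $\lambda\neq 0$.
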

\begin{proof}  From     \cite[Theorem 7]{muller},  we have  $T$ is B-Fredholm  if and only if  $T = T_1 \oplus T_2;$ where   $T_1$ is  Fredholm  and $T_2$ is nilpotent. So if $T$ is B-Fredholm then  $T$ is quasi B-Fredholm. Suppose that $T$ is quasi B-Fredholm, then    there exist  $(M, N),$ $(M^{'}, N^{'})\in Red(T)$  such that $T_{M}$ is an upper  semi-Fredholm, $T_{M^{'}}$ is a lower  semi-Fredholm, $T_{N}$ and  $T_{N^{'}}$ are nilpotent.   From Lemma \ref{thmindex} we have $\mbox{ind}(T_{M})=\mbox{ind}(T_{M^{'}}),$ and so  $(\alpha(T_{M})+\beta(T_{M^{'}}))-\alpha(T_{M^{'}})=\beta(T_{M})\geq 0.$ Thus   $T_{M}$ and $T_{M^{'}}$  are   Fredholm  operators. Again by \cite[Theorem 7]{muller} we deduce that $T$ is B-Fredholm. Suppose that $T = T_1 \oplus T_2$ is  a B-Fredholm operator. Since $T_1$ is Fredholm, then    \cite[Corollary 4.7]{grabiner} implies that    there exists $\epsilon >0$ such that $B_{0}:=B(0, \epsilon)\setminus \{0\} \subset  (\sigma_{e}(T))^C,$  $\mbox{ind}(T_1 - \lambda I) =\mbox{ind}(T_1)$ and  $\mbox{ind}(T - \lambda I) =\mbox{ind}(T_{[m_{T}]})$ for all $\lambda \in B_{0}.$  Let $\lambda \in B_{0},$ as  $T_2$ is nilpotent then  $T-\lambda I = (T_1-\lambda I) \oplus (T_2 - \lambda I)$ is a Fredholm operator and $\mbox{ind}(T - \lambda I)=\mbox{ind}(T_1 - \lambda I).$ Consequently, $\mbox{ind} (T) = \mbox{ind}(T_1)=\mbox{ind} (T_{[m_{T}]}).$   So  the index of $T$ as a quasi B-Fredholm coincides with its usual index as a B-Fredholm.
\end{proof}
\begin{dfn}\cite{labrousse,kato} $T \in L(X)$  is said to be  of  Kato-type  of degree $d$  if there exists    $(M,N)\in Red(T)$ such that    $T_{M}$ is  semi-regular    and $T_{N}$ is  nilpotent of degree $d.$
\end{dfn}
\begin{rema}\label{remunique} The degree $d$ of a Kato-type operator $T \in L(X)$ is  unique (see \cite[Remark p.205]{labrousse}).
\end{rema}
It is easy to see  that if $T\in L(X)$ is of Kato-type of degree $d$ then $T$    is quasi-Fredholm of degree $\mbox{dis}(T),$   $\alpha(T_{M})=\alpha(T_{[n]})$ and  $\beta(T_{M})=\beta(T_{[n]}),$ for every $n\geq d$ and  $(M,N)\in Red(T)$ such that $T_{M}$ is semi-regular and $T_{N}$ is nilpotent. Recall that  the reduced minimal modulus $\gamma(T)$ of an operator $T$ is defined by $\gamma(T):=\underset{x\notin \mathcal{N}(T)}{\mbox{inf}}\,\frac{\|Tx\|}{d(x,\mathcal{N}(T))}.$  The following proposition gives some characterizations  of      quasi semi-B-Fredholm operators.
\begin{prop}   Let $T \in L(X).$ The following statements are equivalents.\\
(i)  $T$ is  quasi semi-B-Fredholm {\rm[}resp.,  quasi upper semi-B-Fredholm, quasi lower  semi-B-Fredholm, quasi  B-Fredholm{\rm]};\\
(ii) $T$  is  of Kato-type of degree $d$ and   $\mbox{min}\,\{\alpha(T_{[d]}), \beta(T_{[d]})\}<\infty$  {\rm[}resp.,   $T$  is  of Kato-type of degree $d$ and  $\alpha(T_{[d]})<\infty,$   $T$  is  of  Kato-type  of degree $d$  and  $\beta(T_{[d]})<\infty,$     $\mbox{max}\,\{\alpha(T_{[n]}), \beta(T_{[n]})\}<\infty$ for an integer $n${\rm]};\\
(iii)  $T$  is of Kato-type   and  $0 \notin \mbox{acc}\,\sigma_{sf}(T)$ {\rm[}resp.,  $T$  is of  Kato-type and $0 \notin \mbox{acc}\,\sigma_{usf}(T),$ $T$  is of Kato-type and $0 \notin \mbox{acc}\,\sigma_{lsf}(T),$  $T$  is of Kato-type and $0 \notin \mbox{acc}\,\sigma_{e}(T)${\rm]}.
\end{prop}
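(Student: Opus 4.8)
The plan is to prove the upper semi-B-Fredholm case completely through the cycle (i)$\Rightarrow$(ii)$\Rightarrow$(iii)$\Rightarrow$(i), to obtain the lower case by the symmetric argument (interchanging $\alpha,\sigma_{usf},SF_+(X)$ with $\beta,\sigma_{lsf},SF_-(X)$), and then to read off the semi-B-Fredholm (``$\min$'') and B-Fredholm (``$\max$'') cases by combining the two one-sided statements.

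For (i)$\Rightarrow$(ii) in the upper case I would rerun the argument of Proposition \ref{propquasi}: from $(M,N)\in Red(T)$ with $T_M$ upper semi-Fredholm and $T_N$ nilpotent, the classical Kato decomposition (Corollary \ref{corkato}) splits $T_M=T_A\oplus T_B$ with $T_A$ semi-regular upper semi-Fredholm and $T_B$ nilpotent, so that $(A,B\oplus N)\in Red(T)$ exhibits $T$ as Kato-type of some degree $d$, well defined by Remark \ref{remunique}; since the semi-regular part satisfies $\alpha(T_A)=\alpha(T_{[d]})$ (the remark following the definition of Kato-type operators) with $\alpha(T_A)<\infty$, this gives $\alpha(T_{[d]})<\infty$. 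The converse (ii)$\Rightarrow$(i) is immediate: a semi-regular part $T_M$ with $\alpha(T_M)=\alpha(T_{[d]})<\infty$ has closed range and finite-dimensional kernel, hence is upper semi-Fredholm, so $T$ is quasi upper semi-B-Fredholm.

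The link with (iii) is supplied by the punctured-neighborhood theorems applied to the splitting $T-\lambda I=(T_M-\lambda I)\oplus(T_N-\lambda I)$, in which $T_N-\lambda I$ is invertible for $\lambda\neq0$. For (ii)$\Rightarrow$(iii), using the quasi-upper decomposition from the equivalent form (i), $T_M-\lambda I$ remains upper semi-Fredholm on a small punctured disc, so $T-\lambda I\in SF_+(X)$ there and $0\notin\mathrm{acc}\,\sigma_{usf}(T)$, while $T$ is Kato-type by (ii). For (iii)$\Rightarrow$(ii), the Kato-type hypothesis gives $T_M$ semi-regular, and the same splitting forces $T_M-\lambda I$ to be upper semi-Fredholm on a punctured disc, hence $\alpha(T_M-\lambda I)<\infty$ there; the constancy of $\lambda\mapsto\alpha(T_M-\lambda I)$ on a full neighborhood of $0$ for a semi-regular $T_M$ then yields $\alpha(T_M)<\infty$, i.e. $\alpha(T_{[d]})<\infty$. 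The ``$\min$'' case reduces to ``upper or lower'': by uniqueness of $d$, $T$ is quasi semi-B-Fredholm iff it is Kato-type with $\alpha(T_{[d]})<\infty$ or $\beta(T_{[d]})<\infty$, that is $\min\{\alpha(T_{[d]}),\beta(T_{[d]})\}<\infty$, and since $\sigma_{sf}=\sigma_{usf}\cap\sigma_{lsf}$ the spectral condition matches. The ``$\max$'' case is handled by Corollary \ref{corrr} together with the characterization of B-Fredholmness through $\max\{\alpha(T_{[n]}),\beta(T_{[n]})\}<\infty$ (Theorem \ref{thm0}), its equivalence with (iii) following as above from $\sigma_e=\sigma_{usf}\cup\sigma_{lsf}$ and the Fredholm punctured-neighborhood theorem.

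The step I expect to be delicate is (iii)$\Rightarrow$(ii): pushing finiteness of $\alpha$ (resp. $\beta$) from the punctured disc back to the centre relies on the exact constancy, not mere upper semicontinuity, of the kernel/cokernel dimensions of $T_M-\lambda I$ for semi-regular $T_M$ (the vanishing of the jump being precisely semi-regularity). In the ``$\min$'' case one must moreover check, via connectedness of the punctured disc and this same constancy, that the side which remains finite does not alternate with $\lambda$; this is the only point where combining the two one-sided statements needs genuine care.
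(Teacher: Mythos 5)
Your proposal is correct and follows essentially the same route as the paper: Corollary \ref{corkato} together with the identity $\alpha(T_{M})=\alpha(T_{[d]})$, $\beta(T_{M})=\beta(T_{[d]})$ for (i)$\Leftrightarrow$(ii), the punctured-neighborhood theorems for semi-regular and semi-Fredholm operators for the link with (iii), and Theorem \ref{thm0} with Corollary \ref{corrr} for the ``max'' case. The only organizational difference is that the paper handles the ``min'' case directly, showing $\min\{\alpha(T_{M}),\beta(T_{M})\}=\min\{\alpha(T-\lambda I),\beta(T-\lambda I)\}<\infty$ on the punctured disc via the constancy of $\alpha(T-\lambda I)$ and $\beta(T-\lambda I)$ there, which is precisely the non-alternation point you flag and resolve in the same way.
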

\begin{proof}   Remark firstly   from   Theorem \ref{thm0} and Corollary \ref{corrr}  that  if $\mbox{max}\,\{\alpha(T_{[n]}), \beta(T_{[n]})\}<\infty$ for an integer $n,$ then $T$  is  quasi B-Fredholm.\\
$(i) \Longleftrightarrow (ii)$ Suppose that $T$ is quasi semi-B-Fredholm, then  there exists  $(A,B)\in Red(T)$ such that $T_{A}$ is semi-Fredholm and $T_{B}$ is nilpotent.   According to  Corollary \ref{corkato}, there exists  a pair  $(M, N)\in Red(T)$ such that    $T_{M}$ is semi-Fredholm which is semi-regular and $T_{N}$ is nilpotent of degree $d.$  Thus $T$ is  of Kato-type  of degree $d$    and $\mbox{min}\,\{\alpha(T_{[d]}), \beta(T_{[d]})\}=\mbox{min}\,\{\alpha(T_{M}), \beta(T_{M})\}<\infty.$ The converse is obvious.  The other    equivalence cases  go similarly.\\
$(iii) \Longleftrightarrow (i)$ Assume that $T$ is of Kato-type  and  $0 \notin \mbox{acc}\,\sigma_{sf}(T);$ where $\mbox{acc}A$ is the accumulation points of a given complex subset $A.$   Then  there exists  $(M,N)\in Red(T)$ such that    $T_{M}$ is  semi-regular    and $T_{N}$ is  nilpotent of degree $d.$  From the  punctured neighborhood theorem for semi-regular  operators,  $B_{0}:=B(0,\epsilon)\setminus\{0\} \subset (\sigma_{se}(T)\cup\sigma_{sf}(T))^{C};$ for some   $\epsilon\leq\gamma(T_{M}).$  On the other hand, it is easily seen that  $\alpha(T_{M})=\alpha(T_{[d]})=\alpha(T-\lambda I)$ and  $\beta(T_{M})=\beta(T_{[d]})=\beta(T-\lambda I),$ thus $\mbox{min}\,\{\alpha(T_{M}), \beta(T_{M})\}=\mbox{min}\,\{\alpha(T-\lambda I), \beta(T-\lambda I)\}<\infty$  for every $\lambda \in B_{0}.$  Thus $T_{M}$ is semi-Fredholm and  so $T$ is  quasi semi-B-Fredholm. The converse is an immediate consequence of the  punctured neighborhood theorem for semi-Fredholm operators.  The other    equivalence cases  go similarly.
\end{proof}
In the sequel we  denote by $\sigma_{qsbf}(T),$ $\sigma_{qusbf}(T),$ $\sigma_{qlsbf}(T)$  and  $\sigma_{qbf}(T)$   respectively,  the quasi   semi-B-Fredholm spectrum, the quasi upper  semi-B-Fredholm spectrum, the quasi lower  semi-B-Fredholm spectrum and the quasi B-Fredholm spectrum of $T \in L(X).$ The second point  of the next corollary is a consequence of the classical  Heine-Borel theorem.
\begin{cor} For every $T \in L(X)$ we have\\
(i) $\sigma_{qsbf}(T),$ $\sigma_{qusbf}(T),$ $\sigma_{qlsbf}(T)$  and  $\sigma_{qbf}(T)$  are  compact.\\
(ii) If $\Omega$ is a component of $(\sigma_{uqsbf}(T))^C$ or  $(\sigma_{qlsbf}(T))^C,$ then the index $\mbox{ind}(T -\lambda I)$ is constant as $\lambda$ ranges over $\Omega.$
\end{cor}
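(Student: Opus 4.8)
The plan is to prove (i) by showing that each of the four sets is closed and bounded and then invoking Heine--Borel, and to prove (ii) by showing that $\lambda\mapsto\mbox{ind}(T-\lambda I)$ is locally constant on the open complement, whence constant on each component. For boundedness, if $|\lambda|>\|T\|$ then $T-\lambda I$ is invertible, hence Fredholm, and therefore quasi B-Fredholm (and a fortiori quasi upper, lower and semi-B-Fredholm, since every semi-Fredholm operator is quasi semi-B-Fredholm); thus all four spectra are contained in $\overline{B(0,\|T\|)}$. For closedness I would use the Kato-type characterizations established above: $\lambda_0\notin\sigma_{qsbf}(T)$ means that $T-\lambda_0 I$ is of Kato-type and $0\notin\mbox{acc}\,\sigma_{sf}(T-\lambda_0 I)$, i.e. $\lambda_0\notin\mbox{acc}\,\sigma_{sf}(T)$. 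Hence there is $\epsilon>0$ with $B(\lambda_0,\epsilon)\setminus\{\lambda_0\}\subset(\sigma_{sf}(T))^C$; for such $\lambda$ the operator $T-\lambda I$ is semi-Fredholm, hence quasi semi-B-Fredholm, and together with $\lambda_0$ itself this yields $B(\lambda_0,\epsilon)\cap\sigma_{qsbf}(T)=\nullset$, so $(\sigma_{qsbf}(T))^C$ is open. The identical argument, with $\sigma_{sf}$ replaced by $\sigma_{usf}$, $\sigma_{lsf}$ and $\sigma_{e}$ respectively, handles $\sigma_{qusbf}$, $\sigma_{qlsbf}$ and $\sigma_{qbf}$. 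Being closed and bounded, each is compact by Heine--Borel.

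For (ii), since $\sigma_{qusbf}(T)$ is closed its complement is open, and a component $\Omega$ of an open subset of $\C$ is itself open and connected; a locally constant function on $\Omega$ is therefore constant, so it suffices to establish local constancy of the index. I would fix $\lambda_0\in\Omega$ and set $S=T-\lambda_0 I$, which is quasi upper semi-B-Fredholm, hence upper semi-B-Fredholm by Proposition \ref{propquasi}. By the characterization there is $(M,N)\in Red(S)$ with $S_M$ semi-regular and upper semi-Fredholm and $S_N$ nilpotent of degree $d$. Using the stability $\alpha(S_M)=\alpha(S_{[n]})$, $\beta(S_M)=\beta(S_{[n]})$ for $n\geq d$ recorded after Remark \ref{remunique}, together with the index-stability of the semi-Fredholm iterates $S_{[n]}$ (\cite[Proposition 2.1]{berkani-sarih}), I get $\mbox{ind}(S)=\mbox{ind}(S_{[n]})=\mbox{ind}(S_M)$ for $n\geq\max\{d,m_S\}$. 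The punctured neighborhood theorem for semi-regular operators then furnishes $\epsilon>0$ such that for $0<|\mu|<\epsilon$ one has $\alpha(S-\mu I)=\alpha(S_M)$ and $\beta(S-\mu I)=\beta(S_M)$, exactly as in the proof of the preceding proposition.

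Writing $\lambda=\lambda_0+\mu$, this reads $\mbox{ind}(T-\lambda I)=\mbox{ind}(S_M)=\mbox{ind}(T-\lambda_0 I)$ for every $\lambda\in B(\lambda_0,\epsilon)\setminus\{\lambda_0\}$, so the index is constant on the whole ball $B(\lambda_0,\epsilon)$. This is the desired local constancy, and the case of $(\sigma_{qlsbf}(T))^C$ is identical with ``upper'' replaced throughout by ``lower''.

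The routine ingredients here are boundedness and the passage from local constancy to constancy on the connected component. The main obstacle is the index bookkeeping in (ii): one must carefully justify that the B-Fredholm index $\mbox{ind}(S)$, defined via $S_{[m_S]}$, coincides with the index $\mbox{ind}(S_M)$ of the semi-regular part, and that the punctured-neighborhood values $\alpha(S-\mu I)$, $\beta(S-\mu I)$ match $\alpha(S_M)$, $\beta(S_M)$; both rest on the $\alpha,\beta$-stability for $n\geq d$ and on index-stability of semi-Fredholm iterates. The only delicate point is that for a quasi upper (resp.\ lower) semi-B-Fredholm operator $\beta(S_M)$ (resp.\ $\alpha(S_M)$) may be infinite, so the common index can equal $-\infty$ (resp.\ $+\infty$); this causes no trouble, since every identity above is an equality of extended cardinals and the constancy statement remains meaningful.
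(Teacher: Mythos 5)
Your proof is correct, and it is exactly the argument the paper leaves implicit: the authors state this corollary with only the one-line remark about Heine--Borel, relying on the preceding characterization of quasi semi-B-Fredholm operators (Kato-type plus $0\notin\mbox{acc}\,\sigma_{*}(T)$) for openness of the complements and on the punctured neighborhood theorems already invoked there for the index constancy. Your filling-in of the index bookkeeping ($\mbox{ind}(S)=\mbox{ind}(S_{[n]})=\mbox{ind}(S_M)$ via the stability of $\alpha(S_{[n]}),\beta(S_{[n]})$ for $n\geq d$ and \cite[Proposition 2.1]{berkani-sarih}), and your remark about the extended-value index when $\beta(S_M)$ or $\alpha(S_M)$ is infinite, are both accurate and consistent with the paper's conventions.
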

In \cite[Theorem 3.2.2]{labrousse}, Labrousse  showed that the degree  of a Kato-type   operator  $T$ acting on a Hilbert space  is exactly  its  degree of stable iteration   $\mbox{dis}(T).$  In the next theorem we extend this result to  the general case of Banach space Kato-type operators.
\begin{thm}\label{thmunique}  The degree of a Kato-type operator  $T \in L(X)$ is equal to  $\mbox{dis}(T).$
\end{thm}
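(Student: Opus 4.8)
The plan is to reduce the statement to the characterization of $\mbox{dis}(T)$ that is already available for quasi-Fredholm operators, and then read off the answer from the Kato decomposition. Fix a pair $(M,N)\in Red(T)$ with $T_{M}$ semi-regular and $T_{N}$ nilpotent of degree $d$ (this $d$ is well defined by Remark \ref{remunique}). As observed in the remark following Remark \ref{remunique}, $T$ is quasi-Fredholm, so by \cite[Lemma 12]{mbekhta} we have $\mbox{dis}(T)=\min\{n\in\N:\R(T^{n})\text{ is closed and }T_{[n]}\text{ is semi-regular}\}$. I would prove that this minimum equals $d$ by establishing the two inequalities $\mbox{dis}(T)\le d$ and $\mbox{dis}(T)\ge d$.

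For $\mbox{dis}(T)\le d$: since $T_{N}^{d}=0$, for every $n\ge d$ we get $T^{n}=T_{M}^{n}\oplus T_{N}^{n}=T_{M}^{n}\oplus 0$, so $\R(T^{n})=\R(T_{M}^{n})$ is closed (a semi-regular operator has closed iterated ranges) and $T_{[n]}=(T_{M})_{[n]}$, which is semi-regular because $T_{M}$ is (the restriction of a semi-regular operator to $\R(T_{M}^{n})$ is again semi-regular). In particular $n=d$ belongs to the set defining the minimum, whence $\mbox{dis}(T)\le d$.

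For $\mbox{dis}(T)\ge d$, I would argue by contradiction. Suppose $q:=\mbox{dis}(T)<d$; then $\R(T^{q})$ is closed and $T_{[q]}$ is semi-regular. Write $\R(T^{q})=\R(T_{M}^{q})\oplus\R(T_{N}^{q})$: the first summand is closed since $T_{M}$ is semi-regular, and the second equals $\R(T^{q})\cap N$ (because $M\cap N=\{0\}$), hence is closed as the intersection of two closed sets. Both summands are $T$-invariant, so $\big(\R(T_{M}^{q}),\R(T_{N}^{q})\big)\in Red(T_{[q]})$ and $T_{[q]}=(T_{M})_{[q]}\oplus(T_{N})_{[q]}$. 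Since $q<d$ we have $T_{N}^{q}\neq 0$, so $\R(T_{N}^{q})\neq\{0\}$, and $(T_{N})_{[q]}$ is a nilpotent operator on this nonzero space, with $\big((T_{N})_{[q]}\big)^{\,d-q}=0$. This is the crux: a semi-regular operator admits no nonzero nilpotent direct summand. Indeed, choose $0\neq w\in\NN\big((T_{N})_{[q]}\big)$, which is nonempty since a nilpotent operator on a nonzero space has nontrivial kernel; then $(0,w)\in\NN(T_{[q]})$, but $(0,w)\notin\R\big(T_{[q]}^{\,d-q}\big)=\R\big((T_{M})_{[q]}^{\,d-q}\big)\oplus\{0\}$. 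Thus $\NN(T_{[q]})\not\subset\R\big(T_{[q]}^{\,d-q}\big)$, so $T_{[q]}$ is not semi-regular, a contradiction. Therefore $\mbox{dis}(T)\ge d$, and together with the first inequality this gives $\mbox{dis}(T)=d$.

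The forward inequality is routine, the content being only that $T_{N}^{d}=0$ collapses the $N$-part of every iterate once $n\ge d$. The main obstacle is the reverse inequality: one must produce a bona fide $Red(T_{[q]})$ splitting of the restricted operator $T_{[q]}$ — in particular verify the closedness of $\R(T_{N}^{q})$, which I handle through the identity $\R(T_{N}^{q})=\R(T^{q})\cap N$ — and then invoke the incompatibility of semi-regularity with a nonzero nilpotent summand. I expect the bookkeeping around this splitting (rather than any deep estimate) to be the delicate part; note in particular that the argument works even when $\alpha(T_{M})$ is infinite, which is why I route everything through the semi-regularity characterization of $\mbox{dis}(T)$ rather than through dimension counts.
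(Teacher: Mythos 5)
Your proof is correct, but it takes a genuinely different route from the paper's. You run both inequalities through the characterization $\mbox{dis}(T)=\min\{n\in\N:\R(T^{n})\text{ is closed and }T_{[n]}\text{ is semi-regular}\}$ of \cite[Lemma 12]{mbekhta}: the forward inequality by checking that $n=d$ lies in the defining set (legitimate, since this simultaneously shows $T$ is quasi-Fredholm, so there is no circularity), and the reverse by showing that for $q<d$ the operator $T_{[q]}$ carries a nonzero nilpotent direct summand, whose kernel vectors lie in $\NN(T_{[q]})$ but outside $\R(T_{[q]}^{d-q})$, contradicting semi-regularity. The paper instead proves $d\geq\mbox{dis}(T)$ via the stabilization criterion for $\R(T)+\NN(T^{m})$ from \cite[Theorem 3.1]{grabiner}, and $d\leq\mbox{dis}(T)$ by applying M\"uller's Kato decomposition \cite[Theorem 7]{muller} to the nilpotent summand $T_{N}$ and then invoking Labrousse's uniqueness of the Kato-type degree (Remark \ref{remunique}). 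Your argument is more elementary and self-contained: it needs neither M\"uller's theorem nor the uniqueness remark (which it in fact re-proves as a byproduct, since $\mbox{dis}(T)$ is intrinsic to $T$). The price is the bookkeeping around the splitting of $T_{[q]}$ — though note that for the contradiction you really only need the set-theoretic facts that $w\in\NN(T)\cap\R(T^{q})\cap N$ is nonzero while $\R(T^{d})=\R(T_{M}^{d})\subseteq M$ and $M\cap N=\{0\}$, so the closedness of $\R(T_{N}^{q})$ and the full $Red(T_{[q]})$ structure, while correctly established via $\R(T_{N}^{q})=\R(T^{q})\cap N$, are not strictly required.
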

\begin{proof} Let $(M, N) \in  Red(T)$ such that  $T_{M}$ is semi-regular and  $T_{N}$ is nilpotent of degree $d.$      Let   $m\geq d,$ since $T_{M}$ is semi-regular then  $\R(T)+\mathcal{N}(T^{m})=\R(T_{M})+\mathcal{N}(T_{M}^{m})+\R(T_{N})+\mathcal{N}(T_{N}^{m})= \R(T_{M})\oplus N=\R(T_{M})+\mathcal{N}(T_{M}^{d})+\R(T_{N})+\mathcal{N}(T_{N}^{d})=\R(T)+\mathcal{N}(T^{d}).$ Thus $d\geq \mbox{dis}(T).$ On the other hand,  $T_{N}$ is  a  quasi B-Fredholm operator   then there exists according to \cite[Theorem 7]{muller}  a pair    $(A, B) \in  Red(T_{N})$ such that  $T_{A}$ is  semi-regular and $T_{B}^{dis(T_{N})}=0.$ The same reasoning as above shows that  $T_{B}$ is nilpotent of degree $\mbox{dis}(T_{N}).$  It is easily seen that $(M\oplus A, B)\in Red(T)$ and  $T_{M\oplus A}$  is semi-regular.   Since  the degree of a Kato-type operator is unique then $d=\mbox{dis}(T_{N})\leq \mbox{dis}(T)=\mbox{max}\,\{\mbox{dis}(T_{M}),\mbox{dis}(T_{N})\}.$ Hence $d=\mbox{dis}(T).$
\end{proof}
\begin{cor} The degree of a  nilpotent operator $T \in L(X)$ is exactly $\mbox{dis}(T).$
\end{cor}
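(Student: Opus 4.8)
The plan is to realize a nilpotent operator as a degenerate Kato-type operator and then quote Theorem \ref{thmunique}. Suppose $T \in L(X)$ is nilpotent of degree $d$, i.e. $T^{d} = 0$ while $T^{d-1} \neq 0$. The first step is to observe that the trivial pair $(\{0\}, X)$ belongs to $Red(T)$: both $\{0\}$ and $X$ are closed and $T$-invariant, and $X = \{0\} \oplus X$. With respect to this pair the component $T_{\{0\}}$ is the zero operator on the trivial space, which is vacuously semi-regular (its range $\{0\}$ is closed and $\mbox{dis}(T_{\{0\}}) = 0$), while $T_{X} = T$ is nilpotent of degree $d$. Hence $T$ is of Kato-type of degree $d$.

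The second step is simply to apply Theorem \ref{thmunique}, which asserts that the degree of any Kato-type operator coincides with $\mbox{dis}(T)$. Together with the uniqueness of the Kato-type degree recorded in Remark \ref{remunique}, this forces the nilpotency degree $d$ to equal $\mbox{dis}(T)$, which is precisely the assertion of the corollary.

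I do not expect a genuine obstacle here; the only point deserving a word of justification is the admissibility of the chosen decomposition, namely that the zero operator on $\{0\}$ qualifies as semi-regular, and this is immediate from the definition of semi-regularity. The corollary is thus nothing more than the specialization of Theorem \ref{thmunique} to the extreme case in which the semi-regular summand is reduced to $\{0\}$.
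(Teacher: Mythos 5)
Your proposal is correct and matches the paper's intent: the corollary is stated without proof precisely because a nilpotent operator of degree $d$ is trivially of Kato-type of degree $d$ via the decomposition $(\{0\},X)$, after which Theorem \ref{thmunique} gives $d=\mbox{dis}(T)$. The appeal to Remark \ref{remunique} is harmless but not needed, since Theorem \ref{thmunique} already identifies the Kato-type degree with $\mbox{dis}(T)$.
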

 Let $M$ be a subset of  $X$ and  $N$ a subset of $X^{*}.$ The annihilator of $M$ and the pre-annihilator of  $N$  are  the closed subspaces  defined respectively, by
$$M^{\perp} := \{f \in X^{*} : f (x) = 0 \text{ for every }  x \in M\},$$
and
$${}^{\perp}N := \{x \in X : f (x) = 0 \text{ for every } f \in N\}.$$
\begin{prop}\label{proppp} If $T \in L(X)$ is  of Kato-type of degree $d$ then $T^{*}$ is   of  Kato-type with  the same degree $d.$   Moreover, $\R(T^{*})+\mathcal{N}(T^{*d})$  is closed in the weak-*-topology  $\sigma(X^{*}, X)$ on $X^{*}.$
\end{prop}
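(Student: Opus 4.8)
The plan is to transfer the Kato decomposition of $T$ to $T^{*}$ through annihilators. Fix $(M,N)\in Red(T)$ with $T_{M}$ semi-regular and $T_{N}$ nilpotent of degree $d$. Since $M$ and $N$ are closed and $X=M\oplus N$ is a topological direct sum, dualizing yields the topological direct sum $X^{*}=N^{\perp}\oplus M^{\perp}$ (indeed $N^{\perp}\cap M^{\perp}=(M+N)^{\perp}=\{0\}$), and the $T$-invariance of $M,N$ forces the $T^{*}$-invariance of $N^{\perp},M^{\perp}$, so $(N^{\perp},M^{\perp})\in Red(T^{*})$. Restriction of functionals gives isomorphisms $N^{\perp}\cong M^{*}$ and $M^{\perp}\cong N^{*}$, under which $(T^{*})_{N^{\perp}}\cong(T_{M})^{*}$ and $(T^{*})_{M^{\perp}}\cong(T_{N})^{*}$; a direct check shows these identifications are moreover homeomorphisms for the respective weak-* topologies $\sigma(X^{*},X)$ and $\sigma(M^{*},M)$, a fact I will need below.

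For the first assertion I would invoke the classical stability of semi-regularity under adjunction (see \cite{labrousse,mullerbook}): $T_{M}$ semi-regular implies $(T_{M})^{*}$, hence $(T^{*})_{N^{\perp}}$, is semi-regular. Since $((T_{N})^{*})^{k}=(T_{N}^{k})^{*}$ and the adjoint of a nonzero operator is nonzero, $(T_{N})^{*}$ --- and therefore $(T^{*})_{M^{\perp}}$ --- is nilpotent of degree exactly $d$. Thus $(N^{\perp},M^{\perp})$ realizes $T^{*}$ as a Kato-type operator whose nilpotent part has degree $d$, and the uniqueness of the Kato-type degree (Remark \ref{remunique}) gives that $T^{*}$ is of Kato-type of degree $d$.

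For the second assertion I would first reduce $\R(T^{*})+\mathcal{N}(T^{*d})$ to a manageable form using the decomposition of $T^{*}$ just obtained. Because $((T^{*})_{M^{\perp}})^{d}=0$ we have $\mathcal{N}(T^{*d})=\mathcal{N}(((T^{*})_{N^{\perp}})^{d})\oplus M^{\perp}$; since $\R((T^{*})_{M^{\perp}})\subseteq M^{\perp}$ and semi-regularity of $(T^{*})_{N^{\perp}}$ gives $\mathcal{N}(((T^{*})_{N^{\perp}})^{d})\subseteq\R((T^{*})_{N^{\perp}})$, collecting the $N^{\perp}$- and $M^{\perp}$-components yields
\[
\R(T^{*})+\mathcal{N}(T^{*d})=\R((T^{*})_{N^{\perp}})\oplus M^{\perp}.
\]
As $T_{M}$ has closed range, the closed range theorem gives $\R((T_{M})^{*})=\mathcal{N}(T_{M})^{\perp}$, which is $\sigma(M^{*},M)$-closed; transporting along the weak-* homeomorphism $N^{\perp}\cong M^{*}$ shows that $\R((T^{*})_{N^{\perp}})$ is $\sigma(X^{*},X)$-closed in $N^{\perp}$, hence in $X^{*}$ (as $N^{\perp}$ is itself weak-*-closed).

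The remaining --- and main --- difficulty is that a sum of two weak-*-closed subspaces need not be weak-*-closed, so one cannot simply add $M^{\perp}$ to conclude. I would resolve this by writing the sum as a preimage: let $P\in L(X)$ be the bounded projection of $X$ onto $M$ along $N$. A short computation shows its adjoint $P^{*}$ is the projection of $X^{*}$ onto $N^{\perp}$ along $M^{\perp}$, and being an adjoint it is $\sigma(X^{*},X)$-continuous. Since $P^{*}$ acts as the identity on $N^{\perp}$ and vanishes on $M^{\perp}$, for every subspace $S\subseteq N^{\perp}$ one has $S\oplus M^{\perp}=(P^{*})^{-1}(S)$; taking $S=\R((T^{*})_{N^{\perp}})$ exhibits $\R(T^{*})+\mathcal{N}(T^{*d})$ as the preimage of a $\sigma(X^{*},X)$-closed set under a $\sigma(X^{*},X)$-continuous map, hence $\sigma(X^{*},X)$-closed. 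I expect the only delicate point to be the identification of $P^{*}$ as the projection onto $N^{\perp}$ along $M^{\perp}$ together with its weak-* continuity; the rest is routine bookkeeping with annihilators.
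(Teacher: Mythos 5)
Your proof is correct, and on both halves it deviates from the paper's argument in ways worth recording. For the first assertion the paper does not build the dual decomposition to get the degree: it quotes the known fact that $T^{*}$ is of Kato-type of \emph{some} degree $d'$, reduces via Theorem \ref{thmunique} to showing $\mbox{dis}(T)=\mbox{dis}(T^{*})$, and proves that equality by dualizing the subspaces $\R(T)+\mathcal{N}(T^{n})$ against $\mathcal{N}(T^{*})\cap\R(T^{*n})$ through annihilators. Your route --- exhibit $(N^{\perp},M^{\perp})\in Red(T^{*})$ explicitly, use stability of semi-regularity under adjoints and exact preservation of the nilpotency degree, then invoke Remark \ref{remunique} --- is more self-contained and bypasses Theorem \ref{thmunique} entirely; what the paper's detour buys is that its $\mbox{dis}(T)=\mbox{dis}(T^{*})$ argument applies to arbitrary quasi-Fredholm operators, which is precisely the generalization the authors record immediately after the proposition and which your argument does not yield. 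For the second assertion both proofs pass through the identity $\R(T^{*})+\mathcal{N}(T^{*d})=\R(T^{*}_{N^{\perp}})\oplus M^{\perp}$ and both rest on the closed range theorem, but the paper realizes the whole sum in one stroke as $\R\bigl((TP_{M}+P_{N})^{*}\bigr)$, where $\R(TP_{M}+P_{N})=\R(T_{M})\oplus N$ is closed, so a single application of \cite[Theorem A.16]{mullerbook} finishes; you instead apply the closed range theorem to $T_{M}$ alone and then absorb $M^{\perp}$ by writing the sum as $(P^{*})^{-1}\bigl(\R((T^{*})_{N^{\perp}})\bigr)$ with $P^{*}$ weak-*-continuous. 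Your version is slightly longer but has the merit of explicitly isolating, and correctly resolving, the genuine pitfall that a sum of two weak-*-closed subspaces need not be weak-*-closed --- a point the paper's one-operator trick silently circumvents. All the identifications you flag as delicate (the weak-* homeomorphism $N^{\perp}\cong M^{*}$, and $P^{*}$ being the weak-*-continuous projection onto $N^{\perp}$ along $M^{\perp}$) check out.
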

\begin{proof}  It is well known that $T^{*}$ is   of Kato-type of degree noted  $d^{'}.$ From  Theorem \ref{thmunique} we have $d=\mbox{dis}(T)$ and  $d^{'}=\mbox{dis}(T^{*}).$ So it suffices to show   that $\mbox{dis}(T)=\mbox{dis}(T^{*}).$   Since $\R(T^{p})$  is closed and    $\R(T^{*q})$ is  closed  for every $p\geq d$ and  $q\geq d^{'},$   then $\R(T^{n})$ is also  closed for every $n\geq b:=\mbox{min}\{d,d^{'}\}.$   This  implies from    \cite[Lemma 10]{mbekhta} that    $\R(T)+\mathcal{N}(T^{n})$ is closed  for every $n\geq b.$  Hence  $(\R(T)+\mathcal{N}(T^{n}))^{\perp}=\mathcal{N}(T^{*})\cap\R(T^{*n})$  and    $\R(T)+\mathcal{N}(T^{n})={}^{\perp}(\mathcal{N}(T^{*})\cap\R(T^{*n}))$ for every $n\geq b.$ Consequently $d=d^{'}.$      On the other hand, it is easily seen that if $T=T_{M}\oplus T_{N}$ is a decomposition of Kato-type  of $T$ then $T^{*}=T^{*}_{N^{\perp}}\oplus T^{*}_{M^{\perp}}$ is a decomposition of Kato-type of $T^{*}.$      From    the proof of  Theorem \ref{thmunique}  we obtain that   $\R(T^{*}_{N^{\perp}})\oplus M^{\perp}=\R(T^{*})+\mathcal{N}(T^{*d}).$ It is easy to get  that  $P_{M^{\perp}}=(P_{N})^{*}$ and $P_{N^{\perp}}=(P_{M})^{*},$ so    $(TP_{M}+ P_{N})^{*}=T^{*}P_{N^{\perp}}+P_{M^{\perp}}.$   As  $\R(TP_{M}+ P_{N})=\R(T_{M})\oplus N$ is  closed  then    \cite[Theorem A.16]{mullerbook} entails that $\R((TP_{M}+ P_{N})^{*})=\R(T^{*})+\mathcal{N}(T^{*d})$     is $w^{*}$-closed.
\end{proof}
From the  proof of Proposition \ref{proppp}, it follows that if $T \in L(X)$ is quasi-Fredholm then $T^{*}$ is also  quasi-Fredholm and $\mbox{dis}(T)=\mbox{dis}(T^{*}).$
\section{A relation between $m_{T},$ $v(T)$ and $dis(T)$}
Let $T \in L(X).$ As mentioned   in the introduction,  the degree of stable iteration $d:=\mbox{dis}(T)$ of $T$ is defined as  $d=\mbox{inf}\,\Delta(T);$   where    $$\Delta(T):=\{m\in\N \,:\,  \mathcal{N}(T_{[m]})= \mathcal{N}(T_{[r]}),\,\forall r \in \N  \,  \, r\geq m \},$$
with $\mbox{inf}\,\emptyset=\infty.$
Let  $r \in \N,$  then  $r\geq d$ if and only if  $\R(T)+ \mathcal{N}(T^{m})=\R(T)+ \mathcal{N}(T^{r}),\,\forall m \in \N  \,\text{  such that}  \, m\geq r,$ see \cite[Theorem 3.1]{grabiner}.\\
 If $T$ is  an upper  semi-B-Fredholm operator then from \cite[Proposition 2.5]{berkani-sarih}, $T$ is quasi-Fredholm of degree $d\geq m_{T}.$ From  the punctured neighborhood theorem for semi-B-Fredholm operators \cite[Theorem 4.7]{grabiner},   there exists $\epsilon:=\gamma(T_{\R(T^{d})})>0$ such that $\lambda \longrightarrow \alpha(T-\lambda I)$ is constant on  $B(0, \epsilon)\setminus\{0\},$  $\alpha(T-\lambda I)\leq \alpha(T_{[m_T]})$ and $\mbox{ind}(T_{[m_T]})=\mbox{ind}(T-\lambda I)$ for every $\lambda\in B(0, \epsilon).$    Analogously,  if   $T$ is a lower   semi-B-Fredholm operator then   $\lambda \longrightarrow  \beta(T-\lambda I)$ is constant on  $B(0, \epsilon)\setminus\{0\},$  $\beta(T-\lambda I)\leq \beta(T_{[m_T]})$ and $\mbox{ind}(T_{[m_T]})=\mbox{ind}(T-\lambda I)$ for every $\lambda\in B(0, \epsilon).$
\begin{dfn}\label{dfn10}  Let $T\in L(X)$ be a semi-B-Fredholm operator.   The jump of $T$ is defined by
$$\mbox{jump}(T)=\left\lbrace
\begin{array}{ll}
\alpha(T_{[m_T]})-\alpha(T-\lambda I) & \mbox{if $T$ is  an upper   semi-B-Fredholm operator}\\
\beta(T_{[m_T]})-\beta(T-\lambda I) & \mbox{if $T$ is  a lower   semi-B-Fredholm operator}
\end{array}
\right.$$
where  $\lambda\in B(0, \epsilon)\setminus\{0\}$ be  arbitrary and  $\epsilon=\gamma(T_{\R(T^{d})})>0.$
\end{dfn}
 The definition of the $\mbox{jump}(T)$ of a semi-B-Fredholm operator  is well defined since  $\mbox{ind}(T_{[m_T]})=\mbox{ind}(T-\lambda I).$  Furthermore, if $T$ is  semi-Fredholm   we find its  jump  given by T. T. West in \cite{west}.
\vspace{0.3cm}
\par The next proposition extends   \cite[Proposition 3]{west}, which establishes that if $T \in L(X)$ is semi-Fredholm then $T$ is semi-regular if and only if $\mbox{jump}(T)=0.$  Note that here $m_{T}=0.$
\begin{prop}\label{prop11}   Let $T\in L(X)$  be a semi-B-Fredholm operator. Then $T_{[m_T]}$ is semi-regular if and only if $\mbox{jump}(T)=0.$
\end{prop}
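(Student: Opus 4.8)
The plan is to compute $\mbox{jump}(T)$ explicitly in terms of the two kernel dimensions $\alpha(T_{[m_T]})$ and $\alpha(T_{[d]})$ (in the upper semi-B-Fredholm case; the lower case is symmetric), and then read off both sides of the asserted equivalence from the behaviour of the decreasing sequence $(\alpha(T_{[n]}))_n$. Throughout I write $d=\mbox{dis}(T)$ and recall that a semi-B-Fredholm operator is quasi-Fredholm of degree $d\geq m_T$, so that $\R(T^d)$ is closed and $T_{[d]}$ is semi-regular, while by Corollary \ref{cor1} the operator $T_{[m_T]}$ is semi-Fredholm with closed range. Assuming first that $T$ is upper semi-B-Fredholm, we have $\alpha(T_{[m_T]})<\infty$ and hence $\alpha(T_{[d]})<\infty$, so $T_{[d]}$ is an upper semi-Fredholm semi-regular operator.

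First I would pin down the constant value $\alpha(T-\lambda I)$ occurring in Definition \ref{dfn10}. For $\lambda\neq 0$, any $x\in\mathcal{N}(T-\lambda I)$ satisfies $x=\lambda^{-n}T^n x\in\R(T^n)$ for every $n$, so $\mathcal{N}(T-\lambda I)\subseteq\R(T^d)$ and therefore $\mathcal{N}(T-\lambda I)=\mathcal{N}(T_{[d]}-\lambda I)$; in particular $\alpha(T-\lambda I)=\alpha(T_{[d]}-\lambda I)$. Since $T_{[d]}$ is semi-regular with finite kernel dimension, the punctured neighborhood theorem for semi-regular operators gives $\alpha(T_{[d]}-\lambda I)=\alpha(T_{[d]})$ for all $0<|\lambda|<\gamma(T_{[d]})=\epsilon$, which is the very radius used in Definition \ref{dfn10}. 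Consequently $\mbox{jump}(T)=\alpha(T_{[m_T]})-\alpha(T_{[d]})$.

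Next I would translate semi-regularity of $T_{[m_T]}$ into a statement about $d$. Since $\R(T_{[m_T]})$ is already closed, $T_{[m_T]}$ is semi-regular if and only if $\mbox{dis}(T_{[m_T]})=0$; using $(T_{[m_T]})_{[r]}=T_{[m_T+r]}$, this means $\mathcal{N}(T_{[m_T]})=\mathcal{N}(T_{[m_T+r]})$ for every $r$, that is, $m_T\in\Delta(T)$. Because $d=\mbox{min}\,\Delta(T)\geq m_T$, this is equivalent to $d=m_T$.

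Finally I would compare the two conditions through the quantities $k_n(T)=\alpha(T_{[n]})-\alpha(T_{[n+1]})\geq 0$. If $d=m_T$ then $\mbox{jump}(T)=\alpha(T_{[m_T]})-\alpha(T_{[d]})=0$. Conversely, if $d>m_T$ then $d-1\notin\Delta(T)$ while $k_n(T)=0$ for all $n\geq d$, which forces $k_{d-1}(T)>0$; hence $\alpha(T_{[m_T]})\geq\alpha(T_{[d-1]})>\alpha(T_{[d]})$ and $\mbox{jump}(T)>0$. Combining this with the previous paragraph gives $\mbox{jump}(T)=0$ if and only if $d=m_T$, if and only if $T_{[m_T]}$ is semi-regular. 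The lower semi-B-Fredholm case is handled identically with $\beta$ in place of $\alpha$, using the lower punctured neighborhood theorem together with the identity $\beta(T_{[n]})-\beta(T_{[n+1]})=k_n(T)$, or by passing to the adjoint via Proposition \ref{proppp}. The step I expect to be most delicate is the clean identification $\alpha(T-\lambda I)=\alpha(T_{[d]})$ on the punctured disc, and, in the lower case, the dual bookkeeping relating $\beta(T-\lambda I)$ to $\beta(T_{[d]})$ through adjoints.
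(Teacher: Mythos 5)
Your proof is correct and follows essentially the same route as the paper's: both reduce the statement to the identity $\mbox{jump}(T)=\alpha(T_{[m_T]})-\alpha(T_{[d]})$ and then to the chain of equivalences $\mbox{jump}(T)=0 \Leftrightarrow d=m_T \Leftrightarrow T_{[m_T]}$ is semi-regular. The only real difference is how the key identity $\alpha(T-\lambda I)=\alpha(T_{[d]})$ is obtained: you derive it directly from the inclusion $\mathcal{N}(T-\lambda I)\subset\R(T^{d})$ together with the stability theorem for semi-regular operators, whereas the paper cites Grabiner's punctured neighbourhood theorem and Kaashoek's lemma.
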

\begin{proof}  Let $\lambda \in  B(0, \epsilon)\setminus\{0\}$. From \cite[Theorem 4.7]{grabiner}, $\alpha(T-\lambda I)=\mbox{dim}\,\frac{\mathcal{N}(T^{d+1})}{\mathcal{N}(T^{d})}.$ Moreover, by    \cite[Lemma 3.1]{kaashoek}  we have  $\mathcal{N}(T_{[d]})\cong \frac{\mathcal{N}(T^{d+1})}{\mathcal{N}(T^{d})}$ and so $\alpha(T-\lambda I)=\alpha(T_{[d]}).$ Therefore $\mbox{jump}(T)=0$ if and only if $\mathcal{N}(T_{[d]})=\mathcal{N}(T_{[m_T]})$ if and only if $d=m_T.$ Since $T_{[d]}$ is semi-regular  then we get the desired result.
\end{proof}

\begin{thm}\label{thmm} If  $T \in L(X)$  is  a semi-B-Fredholm operator then there exists $(M,N)\in Red(T)$ such that  $\mbox{dim}\,N<\infty,$   $\mbox{jump}(T_{M})=0$ (i.e. $\mbox{dis}(T_{M})=m_{T}$)  and $T_{N}$ is  nilpotent  of degree $d.$ Moreover $\mbox{ind}(T)=\mbox{ind}(T_{M}).$
\end{thm}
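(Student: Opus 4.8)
The plan is to transfer T.~T.~West's jump--removal construction \cite{west} to the semi-B-Fredholm setting, producing the nilpotent summand from the Jordan chains of $T$ of length up to $d=\mbox{dis}(T)$. I would first record the data supplied by the preceding results: since $T$ is semi-B-Fredholm, Corollary~\ref{cor1} gives that $\R(T^{m_T})$ is closed and that $T_{[m_T]}$ is semi-Fredholm. Writing $k_i(T)=\dim\mathcal{N}(T_{[i]})/\mathcal{N}(T_{[i+1]})=\dim\big((\mathcal{N}(T)\cap\R(T^i))/(\mathcal{N}(T)\cap\R(T^{i+1}))\big)$, one has (from the proof of Proposition~\ref{prop2}, $k_i(T)$ being simultaneously the $\alpha$- and the $\beta$-drop) $\mbox{jump}(T)=\sum_{i=m_T}^{d-1}k_i(T)$, each $k_i(T)$ finite for $i\ge m_T$ as in the proof of Proposition~\ref{prop4}, while $\mathcal{N}(T_{[i]})\neq\mathcal{N}(T_{[i+1]})$ precisely for $i<d$. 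The target is a finite-dimensional $T$-invariant $N$ with $T_N$ nilpotent of degree exactly $d$ and a closed $T$-invariant complement $M$ with $(T_M)_{[m_T]}$ semi-regular.

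For $N$ I would work level by level (treating the upper semi-B-Fredholm case, the lower case being dual via adjoints). For each $i$ with $m_T\le i\le d-1$, pick kernel vectors $z\in\mathcal{N}(T)\cap\R(T^i)$ whose classes form a basis of $(\mathcal{N}(T)\cap\R(T^i))/(\mathcal{N}(T)\cap\R(T^{i+1}))$; writing $z=T^iu$, the chain $u,Tu,\dots,T^iu$ has length $i+1\le d$ and ends in $\mathcal{N}(T)$. Since the $k_i(T)$ are finite this yields finitely many chains, the longest of length $d$ when $d>m_T$; removing all of them will force $\mathcal{N}((T_M)_{[i]})=\mathcal{N}((T_M)_{[i+1]})$ for $m_T\le i<d$, i.e. annihilate the jump. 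When $\mbox{jump}(T)=0$ (equivalently $d=m_T$, by Proposition~\ref{prop11}) these levels are empty, and I would instead take finitely many chains of length $d$ realising the nonzero class $k_{d-1}(T)\neq0$, which now lies below level $m_T$. In all cases the span $N$ of the chosen chains is finite-dimensional and $T$-invariant, and $T_N$ is nilpotent of degree exactly $d$.

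The decisive and hardest step is to realise $N$ as a genuinely $T$-invariant direct summand, that is, to find a closed $T$-invariant $M$ with $X=M\oplus N$. Once this is done, $T_M$ is again semi-B-Fredholm with $m_{T_M}=m_T$ (the essential degree being unaffected by a finite-dimensional modification), so that ``$(T_M)_{[m_T]}$ semi-regular'' is equivalent, via Proposition~\ref{prop11}, to $\mbox{jump}(T_M)=0$, i.e. $\mbox{dis}(T_M)=m_T$. The chains were selected level by level so that $N$ meets the deep range $\bigcap_n\R(T^n)$ trivially and so that $\mathcal{N}((T_M)_{[r]})$ stabilises at $r=m_T$. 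Constructing the invariant complement is exactly where West's semi-Fredholm argument must be upgraded: a finite-dimensional $N$ is certainly complemented, but one must \emph{correct} a complement to a $T$-invariant one while preserving the semi-regular core, and I expect verifying $X=M\oplus N$ together with the stabilisation $\mathcal{N}((T_M)_{[m_T]})=\mathcal{N}((T_M)_{[r]})$ for all $r\ge m_T$ to be the main obstacle. The finite-dimensionality of $N$ is essential here, both for the existence of a closed complement and for the bookkeeping that keeps the core untouched.

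Finally, the index identity is a formal consequence. Writing $\R(T^{m_T})=\R(T_M^{m_T})\oplus\R(T_N^{m_T})$, the operator $T_{[m_T]}$ splits as $(T_M)_{[m_T]}\oplus(T_N)_{[m_T]}$, and $(T_N)_{[m_T]}$ is a finite-dimensional nilpotent operator, hence of index $0$. By the additivity of the index for semi-B-Fredholm operators \cite[Proposition 2.1]{berkani-sarih} and the definition $\mbox{ind}(T)=\mbox{ind}(T_{[m_T]})$, one obtains $\mbox{ind}(T)=\mbox{ind}((T_M)_{[m_T]})+\mbox{ind}((T_N)_{[m_T]})=\mbox{ind}(T_M)$, completing the plan.
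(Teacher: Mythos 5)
Your plan correctly identifies the shape of the argument (remove finitely many Jordan chains so as to annihilate $\mbox{jump}(T)=\sum_{i=m_T}^{d-1}k_i(T)$, then read off the index), but it has a genuine gap at precisely the point you yourself flag as ``the decisive and hardest step'': you never construct the closed $T$-invariant complement $M$ of the chain space $N$. Observing that a finite-dimensional subspace is complemented and that some complement ``must be corrected'' to a $T$-invariant one preserving the core is not an argument --- that correction \emph{is} the theorem, and it is exactly what West's projection trick supplies. The paper fills this gap with Lemmas \ref{lemm1}--\ref{lemm3}: starting from $y\in \mathcal{N}(T^{d})\setminus(\mathcal{N}(T^{d-1})+\R(T))$, Lemma \ref{lemm1} shows that the cascade $y,Ty,\dots,T^{d-1}y$ is linearly independent modulo the \emph{closed} subspace $\R(T^{d})$; Hahn--Banach then produces $f\in\R(T^{d})^{\perp}=\mathcal{N}(T^{d*})$ with the biorthogonality relations $T^{i*}f(T^{d-j-1}y)=\delta_{ij}$ (Lemma \ref{lemm2}); and the finite-rank operator $P=\sum_{i=0}^{d-1}T^{i*}f\otimes T^{d-i-1}y$ is a projection onto $N=\langle y,\dots,T^{d-1}y\rangle$ that \emph{commutes with} $T$ (Lemma \ref{lemm3}), so that $(\mathcal{N}(P),\R(P))\in Red(T)$ comes for free, with no separate verification of invariance or of the stabilisation of $\mathcal{N}((T_M)_{[r]})$. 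The jump is then removed one unit at a time by iterating this construction $k=\mbox{jump}(T)$ times on the successive $M_i=\mathcal{N}(P_i)$, each step lowering the jump by exactly $1$; note that the chain extracted at step $i$ has length $\mbox{dis}(T_{M_{i-1}})$, which may strictly decrease along the iteration, rather than the lengths $i+1$ you propose.

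Two further points. First, your level-by-level selection of chains $u,Tu,\dots,T^{i}u$ with $T^{i}u=z$ is in the right spirit, but the property that makes the projection exist is that the \emph{top} of each chain avoids $\mathcal{N}(T^{d-1})+\R(T)$ (equivalently $\mathcal{N}(T^{i})+\R(T)$ at level $i$), and that the resulting cascade is independent modulo a closed subspace of finite codimension in the relevant quotient; without isolating this you cannot run the Hahn--Banach step. Second, in the case $\mbox{jump}(T)=0$ with $d=m_T>0$ you propose to take ``finitely many chains realising the nonzero class $k_{d-1}(T)\neq 0$''; since $d-1=m_T-1$ lies below the essential level, $k_{d-1}(T)$ may be infinite, so this must be read as extracting a \emph{single} chain of length $d$ (as the paper does in its Case 1), which already yields $T_N$ nilpotent of degree exactly $d$ with $\dim N=d$. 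Your concluding index computation via the splitting $T_{[m_T]}=(T_M)_{[m_T]}\oplus(T_N)_{[m_T]}$ is fine and essentially matches the paper's.
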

Before giving  the proof of this theorem we need  the following   lemmas.
\begin{lem}\label{lemm1} Let   $T \in L(X)$ be   an operator with the degree of  stable  iteration  $0<d<\infty.$  Then for every $y\in \mathcal{N}(T^{d})\setminus(\mathcal{N}(T^{d-1})+\R(T))$  we have $T^{i}y \in \mathcal{N}(T^{d-i})\setminus(\mathcal{N}(T^{d-i-1})+\R(T^{i+1})),$  $i=0, ..., d-1,$ and $\{T^{i}y\}_{i=0}^{d-1}$ are linearly independent modulo $\R(T^{d}).$
\end{lem}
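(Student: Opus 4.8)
The plan is to work directly with the hypothesis on $y$ and push it forward under $T$, checking three assertions for each index $i \in \{0,\dots,d-1\}$: first that $T^i y \in \mathcal{N}(T^{d-i})$, second that $T^i y \notin \mathcal{N}(T^{d-i-1}) + \R(T^{i+1})$, and finally the linear independence of $\{T^i y\}_{i=0}^{d-1}$ modulo $\R(T^d)$. The inclusion $T^i y \in \mathcal{N}(T^{d-i})$ is immediate, since $T^{d-i}(T^i y) = T^d y = 0$ because $y \in \mathcal{N}(T^d)$. So the whole weight of the lemma lies in the two exclusion/independence statements, and the key tool available is the characterization of the degree of stable iteration $d$ recalled at the start of Section 3: for $r \geq d$ one has $\R(T) + \mathcal{N}(T^m) = \R(T) + \mathcal{N}(T^r)$ for all $m \geq r$, together with the equivalent description $\Delta(T)$ in terms of the stabilizing kernels $\mathcal{N}(T_{[m]})$.

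**First I would** establish the non-membership $T^i y \notin \mathcal{N}(T^{d-i-1}) + \R(T^{i+1})$ by contradiction, propagating information backward. Suppose $T^i y = u + T^{i+1}w$ with $u \in \mathcal{N}(T^{d-i-1})$. The idea is to apply $T^{d-i-1}$ (or an appropriate power) to kill the $\mathcal{N}(T^{d-i-1})$-part and then relate the resulting expression to a membership at level $i=0$, namely to $y \in \mathcal{N}(T^{d-1}) + \R(T)$, which is exactly what the hypothesis on $y$ forbids. More precisely, I expect to show that a relation of this form at level $i$ forces $y \in \mathcal{N}(T^{d-1}) + \R(T)$ after manipulating with the stability identity $\R(T) + \mathcal{N}(T^{d}) = \R(T) + \mathcal{N}(T^{d-1})$ — note this identity is precisely the statement $d - 1 \notin \Delta(T)$ versus $d \in \Delta(T)$ and encodes that the kernels genuinely grow at step $d$ but stabilize afterward. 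The bookkeeping here — tracking which power of $T$ to apply and converting a range term $\R(T^{i+1})$ into $\R(T)$ — is where the argument is most delicate.

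**For the linear independence** modulo $\R(T^d)$, I would suppose a dependence relation $\sum_{i=0}^{d-1} c_i T^i y \in \R(T^d)$ with not all $c_i$ zero, and let $j$ be the smallest index with $c_j \neq 0$. Applying $T^{d-1-j}$ sends every term with index $i > j$ into $\R(T^d)$ (since $T^{d-1-j}T^i y = T^{d-1-j+i}y$ with exponent $\geq d$ when $i \geq j+1$, landing in $\R(T^d)$ via... actually $T^d y = 0$, so those terms vanish into $\mathcal{N}$, which I would exploit instead) and isolates a multiple of $T^{d-1}y$; comparing against $\R(T^d)$ and invoking the just-proved non-membership at a suitable level yields $c_j = 0$, a contradiction. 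The cleanest route is to let $j$ be the \emph{largest} index with $c_j \neq 0$, multiply by $T^{d-1-j}$, use $T^d y = 0$ to annihilate higher terms, and reduce to showing $T^{d-1}y \notin \R(T^d)$, which is the $i = d-1$ case of the exclusion statement ($T^{d-1}y \notin \mathcal{N}(T^0) + \R(T^d) = \R(T^d)$ since $\mathcal{N}(T^0) = \{0\}$).

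**The main obstacle** I anticipate is the first exclusion step: cleanly converting an assumed membership $T^i y \in \mathcal{N}(T^{d-i-1}) + \R(T^{i+1})$ back into the forbidden membership $y \in \mathcal{N}(T^{d-1}) + \R(T)$, because this requires using the stabilization of the sequence $\R(T)+\mathcal{N}(T^n)$ at exactly the right index and carefully handling the interplay between applying powers of $T$ (which can only be \emph{undone} at the cost of adding kernel elements) and the range terms. I would structure this as a short induction or a direct descent on $i$, leaning on the identity $\R(T) + \mathcal{N}(T^{m}) = \R(T) + \mathcal{N}(T^{d})$ for all $m \geq d$ to control the kernel growth, and I expect the independence claim to follow relatively painlessly once the exclusion statements are in hand.
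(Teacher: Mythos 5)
Your overall strategy is the paper's: reduce every assertion to the defining property $y\notin\mathcal{N}(T^{d-1})+\R(T)$ by applying a suitable power of $T$ and pulling back, using the purely algebraic identity $T^{-n}(\R(T^{m}))=\mathcal{N}(T^{n})+\R(T^{m-n})$. Your exclusion step is sound: from $T^{i}y=u+T^{i+1}w$ with $u\in\mathcal{N}(T^{d-i-1})$, applying $T^{d-i-1}$ gives $T^{d-1}y=T^{d}w$, hence $y-Tw\in\mathcal{N}(T^{d-1})$ and $y\in\mathcal{N}(T^{d-1})+\R(T)$, the desired contradiction. (The paper does the same thing packaged as the preimage identity $T^{-i}(\mathcal{N}(T^{d-i-1})+\R(T^{i+1}))=\mathcal{N}(T^{d-1})+\R(T)$.) One remark: the ``main obstacle'' you anticipate is not there. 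No stabilization identity such as $\R(T)+\mathcal{N}(T^{d})=\R(T)+\mathcal{N}(T^{d-1})$ is needed anywhere in this step; the argument is pure algebra once $y$ is given. The hypothesis $0<d<\infty$ enters only to guarantee that $\mathcal{N}(T^{d})\setminus(\mathcal{N}(T^{d-1})+\R(T))$ is nonempty.

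The one concrete misstep is your ``cleanest route'' for the linear independence. If $j$ is the \emph{largest} index with $c_{j}\neq 0$, multiplying $\sum_{i=0}^{j}c_{i}T^{i}y$ by $T^{d-1-j}$ produces exponents $d-1-j,\dots,d-1$, all strictly below $d$ except none: nothing is annihilated and you cannot isolate $c_{j}T^{d-1}y$. The version that works is precisely your \emph{first} formulation: take $j$ \emph{smallest} with $c_{j}\neq 0$ and apply $T^{d-1-j}$; then every term with $i>j$ has exponent $d-1-j+i\geq d$ and vanishes because $y\in\mathcal{N}(T^{d})$, leaving $c_{j}T^{d-1}y\in\R(T^{2d-1-j})\subset\R(T^{d})$, which contradicts the $i=d-1$ case of the exclusion ($T^{d-1}y\notin\R(T^{d})$) unless $c_{j}=0$. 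This is exactly the paper's argument, which eliminates $\alpha_{0},\alpha_{1},\dots$ in increasing order by applying $T^{d-1},T^{d-2},\dots$ and pulling back to $\alpha_{i+1}y\in\mathcal{N}(T^{d-1})+\R(T^{d-i-1})\subset\mathcal{N}(T^{d-1})+\R(T)$. Drop the ``largest index'' variant and your proof is complete.
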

\begin{proof}
Since $0<d<\infty$ then  $\mathcal{N}(T^{d})\not\subset\mathcal{N}(T^{d-1})+\R(T).$ Let  $y\in \mathcal{N}(T^{d})\setminus(\mathcal{N}(T^{d-1})+\R(T))$ and $i\in\{0, ..., d-1\},$ then $T^{i}y \in \mathcal{N}(T^{d-i}).$  Suppose  that $T^{i}y \in \mathcal{N}(T^{d-i-1})+\R(T^{i+1}),$ then  $y\in  T^{-i}(\mathcal{N}(T^{d-i-1})+\R(T^{i+1}))=\mathcal{N}(T^{d-1})+\R(T)$ and this is a contradiction. Thus $T^{i}y \in \mathcal{N}(T^{d-i})\setminus(\mathcal{N}(T^{d-i-1})+\R(T^{i+1})).$\\ Let $(\alpha_{i})_{i=0}^{d-1}\subset  \C$ such that $\displaystyle\sum_{i=0}^{d-1} \alpha_{i} T^{i}y\in \R(T^{d}).$ Then $\alpha_{0}T^{d-1}y=T^{d-1}(\displaystyle\sum_{i=0}^{d-1} \alpha_{i} T^{i}y)\in \R(T^{2d-1}).$ Thus $\alpha_{0}y\in T^{-(d-1)}(\R(T^{2d-1}))=\mathcal{N}(T^{d-1})+\R(T^{d})\subset \mathcal{N}(T^{d-1})+\R(T),$ and this  implies that  $\alpha_{0} = 0.$ If $d\geq2,$    then by   similar arguments we get  $\alpha_{j}=0,$ for  $j\in\{0,\dots, i\};$ where $i=0, ..., d-2.$ Therefore  $\alpha_{i+1}y\in  T^{-(d-1)}(\R(T^{2d-i-2}))=\mathcal\mathcal{N}(T^{d-1})+\R(T^{d-i-1})\subset \mathcal{N}(T^{d-1})+\R(T),$ since $d-i\geq 2.$ Thus $\alpha_{i+1}=0$ and so $y, Ty, ..., T^{d-1}y$ are linearly independent modulo $\R(T^{d}).$
\end{proof}
\begin{lem}\label{lemm2}  Let  $T \in L(X)$ be  a quasi-Fredholm operator of degree $d>0.$  Then for every  $y\in \mathcal{N}(T^{d})\setminus(\mathcal{N}(T^{d-1})+\R(T))$   there exists $f \in X^{*}$  satisfies the following\\
(a)    $T^{i*}f(T^{d-j-1}y)=\delta_{ij},$ for every $0\leq i,j\leq d-1.$\\
(b) $T^{i*}f \in \mathcal{N}(T^{(d-i)*})\setminus(\mathcal{N}(T^{(d-i-1)*})+\R(T^{(i+1)*})),$ for  every $0\leq i \leq d-1.$\\
(c)  The cascade  $\{T^{i*}f\}_{i=0}^{d-1}$  is  linearly independent modulo $\R(T^{d*}).$\\
Such cascade  $\{T^{i*}f\}_{i=0}^{d-1}$  called  adjoint cascade of $\{T^{i}y\}_{i=0}^{d-1}.$
\end{lem}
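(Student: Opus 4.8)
The plan is to reduce the two structural assertions (b) and (c) to Lemma \ref{lemm1} applied to the \emph{dual} operator $T^*$, after constructing an appropriate $f$ satisfying (a). The key enabling fact is the remark following Proposition \ref{proppp}: since $T$ is quasi-Fredholm, $T^*$ is quasi-Fredholm with $\mbox{dis}(T^*)=\mbox{dis}(T)=d$, so Lemma \ref{lemm1} is available for $T^*\in L(X^*)$. Thus it will suffice to produce $f\in\mathcal{N}(T^{*d})\setminus(\mathcal{N}(T^{*(d-1)})+\R(T^*))$ that also satisfies (a); then conclusions (b) and (c) are \emph{literally} the output of Lemma \ref{lemm1} for $T^*$ evaluated at $f$, because $(T^*)^i=T^{i*}$, $\mathcal{N}((T^*)^{d-i})=\mathcal{N}(T^{*(d-i)})$ and $\R((T^*)^{i+1})=\R(T^{*(i+1)})$.

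To construct $f$, I would first invoke Lemma \ref{lemm1} for $T$ itself: the vectors $\{T^{i}y\}_{i=0}^{d-1}$ are linearly independent modulo $\R(T^{d})$, and $\R(T^{d})$ is closed since $T$ is quasi-Fredholm of degree $d$. Set $Y:=\mathrm{span}\{T^{i}y:0\le i\le d-2\}+\R(T^{d})$, which is closed, being the sum of a closed subspace and a finite-dimensional one. The linear independence modulo $\R(T^{d})$ forces $T^{d-1}y\notin Y$, so Hahn--Banach provides $f\in X^{*}$ with $f|_{Y}=0$ and $f(T^{d-1}y)=1$. Assertion (a) then follows from the reindexing $T^{i*}f(T^{d-j-1}y)=f(T^{\,d-1+i-j}y)$: the diagonal $i=j$ gives $f(T^{d-1}y)=1$; for $i<j$ the exponent lies in $\{0,\dots,d-2\}$, so $T^{\,d-1+i-j}y\in Y$ and the value is $0$; and for $i>j$ the exponent is $\ge d$, whence $T^{\,d-1+i-j}y=0$ because $y\in\mathcal{N}(T^{d})$.

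It remains to verify the membership $f\in\mathcal{N}(T^{*d})\setminus(\mathcal{N}(T^{*(d-1)})+\R(T^*))$. Since $\R(T^{d})\subset Y$ we have $f|_{\R(T^{d})}=0$, i.e.\ $T^{*d}f=0$, so $f\in\mathcal{N}(T^{*d})$. For the exclusion I would argue directly by pairing, so as to avoid the identity $\R(T^*)=\mathcal{N}(T)^{\perp}$ (which may fail, as $\R(T)$ need not be closed): if $f=g+T^{*}h$ with $g\in\mathcal{N}(T^{*(d-1)})$ and $h\in X^{*}$, then evaluating at $T^{d-1}y$ gives $1=f(T^{d-1}y)=g(T^{d-1}y)+h(T^{d}y)$. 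Here $T^{d}y=0$, while the \emph{always valid} identity $\mathcal{N}(T^{*(d-1)})=\R(T^{d-1})^{\perp}$ forces $g(T^{d-1}y)=0$ since $T^{d-1}y\in\R(T^{d-1})$; this yields $1=0$, a contradiction.

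Finally, applying Lemma \ref{lemm1} to $T^*$ at the vector $f$ delivers (b) and (c) verbatim, completing the proof. The main difficulty I anticipate is conceptual rather than computational: recognizing that (b) and (c) are a dual incarnation of Lemma \ref{lemm1} and organizing the transfer to $T^*$ so that only the hypotheses one can actually certify (namely $\mbox{dis}(T^*)=d$ from the remark after Proposition \ref{proppp}) are used. The one genuine technical subtlety is the exclusion $f\notin\mathcal{N}(T^{*(d-1)})+\R(T^*)$, where one must resist appealing to $\R(T^*)=\mathcal{N}(T)^{\perp}$ and instead test against $T^{d-1}y$, relying solely on $\mathcal{N}(T^{*(d-1)})=\R(T^{d-1})^{\perp}$.
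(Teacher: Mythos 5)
Your proposal is correct and follows essentially the same route as the paper: Hahn--Banach applied to $\langle y,\dots,T^{d-1}y\rangle\oplus\R(T^{d})$ (using Lemma \ref{lemm1} for $T$ and the closedness of $\R(T^{d})$) to produce $f$, a direct pairing against $T^{d-1}y$ to rule out $f\in\mathcal{N}(T^{*(d-1)})+\R(T^{*})$, and then Lemma \ref{lemm1} applied to $T^{*}$ via $\mbox{dis}(T^{*})=d$ from Proposition \ref{proppp} to obtain (b) and (c). Your version is in fact slightly more careful than the paper's in explicitly forcing $f(y)=0$ (the paper's normalization $f(T^{d-i}y)=\delta_{i1}$ for $0\leq i\leq d-1$ omits the value at $y$ itself) and in flagging that only $\mathcal{N}(T^{*(d-1)})=\R(T^{d-1})^{\perp}$ is needed.
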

\begin{proof}   We  denote by $Y=<y,\dots,T^{d-1}y>$ the subspace spanned by the cascade $\{T^{i}y\}_{i=0}^{d-1}.$ Then Lemma \ref{lemm1} implies that  $Y \cap \R(T^{d})=\{0\}.$  As  $\R(T^{d})$ is closed,  according to   Hahn-Banach theorem there exists $f \in \R(T^{d})^{\perp}=\mathcal{N}(T^{d*})$ such that $f(T^{d-i}y)=\delta_{i1},$ for every $0\leq i\leq d-1.$ It is not difficult  to see that  $T^{*i}f(T^{d-j-1}y)=\delta_{ij},$ for every $0\leq i,j\leq d-1.$   Moreover, $f \in \mathcal{N}(T^{d*})\setminus(\mathcal{N}(T^{(d-1)*})+\R(T^{*})).$ Indeed, if  $f \in \mathcal{N}(T^{(d-1)*})+\R(T^{*})$ then there exists $(g,h)\in  \mathcal{N}(T^{(d-1)*})\times X^{*}$ such that  $f=g+T^{*}h,$ thus $f(T^{d-1}y)=g(T^{d-1}y)+T^{*}h(T^{d-1}y)=0,$ and this is impossible.     On the other hand, since   $0<\mbox{dis}(T^{*})=d<\infty$ (see Proposition \ref{proppp})  we conclude from Lemma  \ref{lemm1} that the  linear form  $f$ satisfies the points $(b)$ and $(c).$
\end{proof}
Let $(g,x) \in X^{*}\times X$ be non-zero. We denote by $g\otimes x$ the rank-one operator defined by $(g\otimes x)z=g(z)x$ for all $z\in X.$  Note that every rank-one operator in $L(X)$ can be written in this form.
\begin{lem}\label{lemm3}    Let  $T \in L(X)$ be  a quasi-Fredholm operator of degree $d>0,$     $y\in \mathcal{N}(T^{d})\setminus(\mathcal{N}(T^{d-1})+\R(T))$ and $\{T^{i*}f\}_{i=0}^{d-1}$  be an  adjoint cascade of  $\{T^{i}y\}_{i=0}^{d-1}.$   Then\\
(a)  $P:=\displaystyle\sum_{i=0}^{d-1} T^{i*}f\otimes T^{d-i-1}y$ is a finite-rank  projection onto $Y$  which commutes with $T;$ where $Y:=<y,\dots,T^{d-1}y>.$\\
(b)  $\mathcal{N}(P)=\{x\in X : \{x, \dots,T^{d-1}x\} \subset \mathcal{N}(f)\}$ is a closed $T$-invariant subspace of $X.$\\
(c)   $T_{Y}$ is nilpotent of degree $d$ and $\mbox{jump}(T_{Y})=1.$
\end{lem}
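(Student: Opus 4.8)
The plan is to unwind the definitions of the rank-one summands and to exploit the two vanishing relations that drive the whole construction: $T^{d}y=0$ (since $y\in\mathcal{N}(T^{d})$) and $T^{d*}f=0$ (since $f\in\R(T^{d})^{\perp}=\mathcal{N}(T^{d*})$ by Lemma \ref{lemm2}). Writing the operator explicitly, for every $z\in X$ one has
$$Pz=\sum_{i=0}^{d-1}(T^{i*}f)(z)\,T^{d-i-1}y=\sum_{i=0}^{d-1}f(T^{i}z)\,T^{d-i-1}y,$$
which is manifestly of finite rank with range contained in $Y=<y,\dots,T^{d-1}y>$.

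For part (a), I would first test $P$ on the spanning cascade: using the duality relation $T^{i*}f(T^{d-j-1}y)=\delta_{ij}$ from Lemma \ref{lemm2}(a), a one-line computation gives $P(T^{d-j-1}y)=T^{d-j-1}y$ for each $j$, so $P$ restricts to the identity on $Y$; together with $\R(P)\subseteq Y$ this yields both $\R(P)=Y$ and $P^{2}=P$. For the commutation $TP=PT$ I would compute both sides termwise and shift the summation index: in $TPz$ the lowest term carries the factor $T^{d}y=0$ and drops out, while in $PTz$ the highest term carries $f(T^{d}z)=(T^{d*}f)(z)=0$ and drops out; the two resulting sums then coincide term by term. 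The linear independence of $\{T^{i}y\}_{i=0}^{d-1}$ (Lemma \ref{lemm1}) also shows $\dim Y=d$, a fact I will reuse in part (c).

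For part (b), since the vectors $T^{d-i-1}y$ are linearly independent, the expression for $Pz$ vanishes if and only if all coefficients vanish, i.e. $f(T^{i}x)=0$ for $0\leq i\leq d-1$; this is exactly the stated description of $\mathcal{N}(P)$. Being the kernel of a bounded operator (equivalently, a finite intersection of kernels of the continuous functionals $x\mapsto f(T^{i}x)$), it is closed, and the commutation relation from (a) makes it $T$-invariant. In particular $(Y,\mathcal{N}(P))\in Red(T)$, which is precisely the decomposition feeding Theorem \ref{thmm}.

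For part (c), the action $T_{Y}\colon T^{i}y\mapsto T^{i+1}y$ (with $T^{d-1}y\mapsto T^{d}y=0$) is a single nilpotent Jordan block on the $d$-dimensional space $Y$; since $T^{d-1}y\neq0$ (because $y\notin\mathcal{N}(T^{d-1})$) while $T_{Y}^{d}=0$, it is nilpotent of degree exactly $d$. For the jump I would note that $Y$ is finite-dimensional, so $m_{T_{Y}}=0$ and $\alpha((T_{Y})_{[0]})=\alpha(T_{Y})=1$ (the kernel is spanned by $T^{d-1}y$), whereas $T_{Y}-\lambda I$ is invertible for every small $\lambda\neq0$, so $\alpha(T_{Y}-\lambda I)=0$; hence $\mbox{jump}(T_{Y})=1-0=1$ by Definition \ref{dfn10}. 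The only real care needed is in part (a), namely checking that the two boundary terms genuinely cancel so that $TP=PT$; everything else is bookkeeping with the duality relations of the adjoint cascade.
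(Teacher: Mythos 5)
Your proposal is correct and follows essentially the same route as the paper: the duality relations $T^{i*}f(T^{d-j-1}y)=\delta_{ij}$ give idempotency and $\R(P)=Y$, the boundary terms $T^{d}y=0$ and $T^{d*}f=0$ give $TP=PT$ by an index shift, and the kernel description, closedness, $T$-invariance and the single-Jordan-block computation of the jump all match the paper's argument. The only cosmetic difference is that the paper verifies $P_iP_j=\delta_{ij}P_i$ on the rank-one summands rather than testing $P$ on the spanning cascade, and checks $T$-invariance of $\mathcal{N}(P)$ directly from $f\in\mathcal{N}(T^{d*})$ instead of via the commutation relation; both are equivalent.
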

\begin{proof} (a) Let  $P_{i}=T^{i*}f\otimes T^{d-i-1}y$ be  the rank-one    operator; it is easily seen that  $P_{i} \in L(X),$     $\R(P_{i})=<T^{d-i-1}y>$ and $P_{i}P_{j}=P_{j}P_{i}=\delta_{ij}P_{i},$ for every  $0\leq i,j\leq d-1.$  Thus $P=\displaystyle\sum_{i=0}^{d-1}P_{i} \in L(X)$  is a projection and $\R(P)=Y.$  Moreover, $TP_{i}=T(T^{i*}f\otimes T^{d-i-1}y)=T^{i*}f\otimes T^{d-i}y$ and  $P_{i}T=T^{(i+1)*}f\otimes T^{d-i-1}y,$ $i=0,\dots,d-1.$ Since $y \in \mathcal{N}(T^{d})$ and $f \in \mathcal{N}(T^{d*})$ then  $TP_{0}=P_{d-1}T=0$ and hence $TP=PT.$\\
(b)  It is easily seen that $\mathcal{N}(P)=\{x\in X : \{x,\dots,T^{d-1}x\} \subset \mathcal{N}(f)\}.$ Moreover, it is clear that   $\mathcal{N}(P)$ is closed.  Let $x \in \mathcal{N}(P),$  as $f \in \mathcal{N}(T^{d*})$ then  $\{Tx,\dots,T^{d}x\} \subset \mathcal{N}(f)$ and consequently    $T(\mathcal{N}(P))\subset \mathcal{N}(P).$\\
(c) We have   $\R(T_{Y}^{d-1})=\mathcal{N}(T_{Y})=<T^{d-1}y>$ and   $\R(T_{Y}^{d})=\{0\}.$  So $T_{Y}$ is nilpotent of degree $d$ with      $\mbox{jump}(T_{Y})=1.$
\end{proof}
 Now  we give the proof of Theorem \ref{thmm}.
\begin{proof}[Proof of Theorem \ref{thmm}] Since $T$  is  semi-B-Fredholm then $T$ is quasi-Fredholm of degree $d.$ We distinguish two cases:\\ Case 1:   $d=m_{T}.$   In this case  if $d=0$ then   the   theorem  is  true   with   $M=X$ and $N=\{0\}.$     If $d >0$ then  let      $f$ be the linear form defined in Lemma \ref{lemm2} and let  $P$ be the projection defined in Lemma \ref{lemm3}.  Hence  the proof is complete  by taking $M=\mathcal{N}(P)=\displaystyle\bigcap_{i=0}^{d-1}\,\mathcal{N}(T^{i*}f)$ and $N=\R(P).$  Indeed,   $m_{T}=max\{m_{T_{M}},m_{T_{N}}\}=m_{T_{M}}$ and $d=max\{\mbox{dis}(T_{M}),\mbox{dis}(T_{N})\}\geq \mbox{dis}(T_{M})\geq m_{T}.$ Thus $d=\mbox{dis}(T_{M})=m_{T}$ and $\mbox{jump}(T_{M})=0.$\\
Case 2:   $d>m_{T}.$ Let  $P_{1}\in L(X)$ be the projection defined in Lemma \ref{lemm3}. If we take $M_{1}=\mathcal{N}(P_{1})$ and $N_{1}=\R(P_{1})=<y,\dots, T^{d-1}y>;$ where $y\in \mathcal{N}(T^{d})\setminus(\mathcal{N}(T^{d-1})+\R(T)),$  then  $(M_{1},N_{1})\in Red(T),$ $\mbox{dim}\,N_{1}=d<\infty$ and $T_{N_{1}}$ is a nilpotent operator of degree $d.$ Moreover $\R(T^{i}_{N_{1}})=<T^{i}y,...,T^{d-1}y>$ and $\mathcal{N}(T^{i}_{N_{1}})=<T^{d-i}y,...,T^{d-1}y>,$ $i=0,..., d-1;$ where  $\mathcal{N}(T^{0}_{N_{1}})=\{0\}.$
Thus $\alpha(T_{[m_{T}]})=\alpha((T_{M_{1}})_{[m_{T}]})+1$ and $\beta(T_{[m_{T}]})=\beta((T_{M_{1}})_{[m_{T}]})+1.$
So $\mbox{jump}(T_{M_{1}})= \mbox{jump}(T)- 1.$ In the sequel we denote  by $k:=\mbox{jump}(T)>0.$ Continuing this process   $k$ time,   we get from Lemma \ref{lemm3} a sequence  of  projections $(P_{i})_{i=1}^{k}$ with $P_{i+1}\in L(M_{i}),$ $i=0,\dots, k-1;$  where $(M_{0}, N_{0})=(X, \{0\})$ and   $(M_{i}, N_{i})=(\mathcal{N}(P_{i}),\R(P_{i})),$ $i=1,\dots, k.$      Then   $(M_{i+1}, N_{i+1})\in Red(T_{M_{i}}),$    $d_{i}:=\mbox{dim}\,N_{i+1}=\mbox{dis}(T_{M_{i}})$    and $T_{N_{i+1}}$ is nilpotent of degree $d_{i},$ $i=0,\dots ,k-1,$  $\mbox{jump}(T_{M_{i}})=\mbox{jump}(T)- i,$ $i=0,\dots ,k.$  Let $M:=M_{k}$ and $N:=N_{1}\oplus ... \oplus N_{k}.$  Then $(M, N)\in Red(T),$  $\alpha(T_{[d]})=\alpha((T_{M})_{[m_{T}]}),$  $\beta(T_{[d]})=\beta((T_{M})_{[m_{T}]}),$  $\mbox{jump}(T_{M})=0,$  $\mbox{jump}(T_{N})=k,$ $d+(k-1)m_{T}\leq\mbox{dim}\,N=\displaystyle\sum_{i=0}^{k-1}d_{i}\leq kd$   and $T_{N}$ is nilpotent of degree $d.$ Thus $\mbox{ind}(T)=\mbox{ind}(T_{M}).$
\end{proof}
From Theorem \ref{thmm}, we  immediately obtain the following corollary which is  the main result of \cite{west} (see also \cite[Proposition 2.5]{west1}). And it is also a special case of Kato's decomposition theorem \cite[Theorem 4]{kato}.
\begin{cor}\label{corkato} If  $T \in L(X)$  is a semi-Fredholm operator then there exists $(M,N)\in Red(T)$ such that $\mbox{dim}\,N<\infty,$ $T_{M}$ is semi-regular operator and $T_{N}$ is  nilpotent of degree $\mbox{dis}(T).$
\end{cor}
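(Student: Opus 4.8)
The plan is to read this statement as the special case $m_T=0$ of Theorem \ref{thmm}. First I would recall from the introduction that an upper (resp.\ lower) semi-Fredholm operator is precisely an upper (resp.\ lower) semi-B-Fredholm operator with essential degree $m_T=0$; hence a semi-Fredholm $T$ is in particular semi-B-Fredholm, and Theorem \ref{thmm} applies verbatim.

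Applying Theorem \ref{thmm}, I obtain a pair $(M,N)\in Red(T)$ with $\mbox{dim}\,N<\infty$, with $T_N$ nilpotent of degree $d=\mbox{dis}(T)$, and with $\mbox{jump}(T_M)=0$; this already yields the assertions about $N$ and $T_N$ word for word. It remains only to upgrade ``$\mbox{jump}(T_M)=0$'' into ``$T_M$ is semi-regular.'' Here I would use the identity $m_{T_M}=m_T$ established inside the proof of Theorem \ref{thmm} (the decomposition $m_T=\max\{m_{T_M},m_{T_N}\}=m_{T_M}$), so that $m_{T_M}=m_T=0$. By Proposition \ref{prop11} applied to the semi-B-Fredholm operator $T_M$, the vanishing $\mbox{jump}(T_M)=0$ is equivalent to $(T_M)_{[m_{T_M}]}$ being semi-regular; since $m_{T_M}=0$ we have $(T_M)_{[0]}=T_M$, whence $T_M$ itself is semi-regular. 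Equivalently, one may simply note that $\mbox{dis}(T_M)=m_T=0$ together with the closedness of $\R(T_M)$ (which holds because $T_M$ is semi-Fredholm) is exactly the definition of semi-regularity.

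Because everything reduces to the already-proved Theorem \ref{thmm} together with Proposition \ref{prop11}, I do not expect any genuine obstacle; the only point requiring a line of care is the bookkeeping $m_{T_M}=m_T=0$, which guarantees that the ``$(T_M)_{[m_T]}$ semi-regular'' conclusion of Proposition \ref{prop11} collapses to ``$T_M$ semi-regular'' rather than to semi-regularity of some higher power range $\R(T_M^{\,j})$. The index equality $\mbox{ind}(T)=\mbox{ind}(T_M)$ from Theorem \ref{thmm} is not needed for the present statement, but it is consistent with it, since $T_N$ acts on the finite-dimensional space $N$ and therefore contributes index zero.
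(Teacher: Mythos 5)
Your proposal is correct and follows exactly the paper's route: the paper derives Corollary \ref{corkato} immediately from Theorem \ref{thmm} by specializing to $m_T=0$, where $\mbox{jump}(T_M)=0$ (equivalently $\mbox{dis}(T_M)=m_T=0$) together with closedness of $\R(T_M)$ gives semi-regularity of $T_M$. Your extra bookkeeping via $m_{T_M}=m_T=0$ and Proposition \ref{prop11} just makes explicit what the paper leaves implicit.
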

The next theorem proves  that  the degree of stable iteration $d=\mbox{dis}(T)$ of  a  semi-Fredholm operator $T \in L(X)$   is equal to
$$d=\left\lbrace
\begin{array}{ll}
0 & \mbox{if $T$ is semi-regular}\\
v(T) &  \mbox{else}
\end{array}
\right.$$
 where $v(T):=\mbox{inf}\{n \in \N: \mathcal{N}(T)\not\subset \R(T^{n})\}$  is the degree of the semi-regularity of $T.$ So  if $d>0$ then $\mathcal{N}(T)\subset \R(T^{n}) \text{ and } \mathcal{N}(T)\not\subset \R(T^{d})$ for every $n<d.$  Note that if $T$ is semi-regular then $v(T)=\infty$ and $\mbox{dis}(T)=0.$
\begin{thm}\label{vd} If $T \in L(X)$ is  semi-Fredholm which is not   semi-regular  then $\mbox{dis}(T)=v(T).$
\end{thm}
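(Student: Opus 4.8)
The plan is to transfer everything onto the nilpotent summand of a Kato-type decomposition and analyse it there. Since $T$ is semi-Fredholm but not semi-regular, Corollary \ref{corkato} provides a pair $(M,N)\in Red(T)$ with $\mbox{dim}\,N<\infty$, $T_{M}$ semi-regular, and $T_{N}$ nilpotent of degree $d:=\mbox{dis}(T)$; here $d>0$ exactly because $T$ is not semi-regular, and $d<\infty$ because $T$, being semi-Fredholm, is quasi-Fredholm. I would fix this splitting once and for all and work with the induced decompositions $\mathcal{N}(T)=\mathcal{N}(T_{M})\oplus\mathcal{N}(T_{N})$ and $\R(T^{n})=\R(T_{M}^{n})\oplus\R(T_{N}^{n})$ for all $n$.

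The first step is to prove $v(T)=v(T_{N})$. As $X=M\oplus N$ is a direct sum of closed $T$-invariant subspaces, a vector $(a,b)\in\mathcal{N}(T_{M})\oplus\mathcal{N}(T_{N})$ lies in $\R(T^{n})$ if and only if $a\in\R(T_{M}^{n})$ and $b\in\R(T_{N}^{n})$. Semi-regularity of $T_{M}$ gives $\mathcal{N}(T_{M})\subset\R(T_{M}^{n})$ for every $n$, so the $M$-component never obstructs the containment; hence $\mathcal{N}(T)\subset\R(T^{n})$ holds precisely when $\mathcal{N}(T_{N})\subset\R(T_{N}^{n})$, and therefore $v(T)=v(T_{N})$. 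This confines the whole problem to the finite-dimensional nilpotent operator $T_{N}$.

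The easy half is the inequality $v(T)\leq d$. Since $T_{N}$ is nilpotent of degree $d>0$ we have $\R(T_{N}^{d})=\{0\}\neq\mathcal{N}(T_{N})$, whence $\mathcal{N}(T)\not\subset\R(T^{d})$ and so $v(T)\leq d$. Concretely, choosing $y\in\mathcal{N}(T^{d})\setminus(\mathcal{N}(T^{d-1})+\R(T))$ as in Lemma \ref{lemm1}, the vector $T^{d-1}y$ lies in $\mathcal{N}(T)$ and is linearly independent modulo $\R(T^{d})$, so it does not belong to $\R(T^{d})$; this exhibits the failure of containment at level $d$.

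The heart of the matter, and the step I expect to be the main obstacle, is the reverse inequality $v(T)\geq d$, that is $\mathcal{N}(T)\subset\R(T^{n})$ for all $n<d$. Through the reduction this says $\mathcal{N}(T_{N})\subset\R(T_{N}^{n})$ for $n\leq d-1$, i.e. that every Jordan chain of $T_{N}$ has length at least $d$, hence exactly $d$ since the degree (the longest chain) equals $d$. Equivalently, writing $\alpha(T_{[n]})=\mbox{dim}(\mathcal{N}(T)\cap\R(T^{n}))$, one must show this sequence stays equal to $\alpha(T)$ throughout $0\leq n<d$ and only then descends to its stable value. My plan would be to extract this from the fact that $d=\mbox{dis}(T)$ is the exact stabilization index of $\mathcal{N}(T_{[n]})$, combined with the semi-Fredholm constancy of $\alpha(T-\lambda I)$ on a punctured disc recorded before Definition \ref{dfn10}, via the isomorphism $\mathcal{N}(T_{[n]})\cong\mathcal{N}(T^{n+1})/\mathcal{N}(T^{n})$ of \cite[Lemma 3.1]{kaashoek}. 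The delicate point is precisely that $v$ is governed by the shortest chain of $T_{N}$ while $d$ is governed by the longest; forcing these to agree is where the argument must invoke the full strength of the semi-Fredholm hypothesis rather than the bare Kato-type decomposition, and it is the crux on which the proof rests.
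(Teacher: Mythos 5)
Your reduction to the nilpotent summand is sound: using Corollary \ref{corkato} you correctly get $v(T)=v(T_{N})$ and the inequality $v(T)\leq d:=\mbox{dis}(T)$. But the reverse inequality $d\leq v(T)$ -- which you yourself single out as ``the crux on which the proof rests'' -- is never actually proved; you only describe a plan for it. This is a genuine gap, and it cannot be closed, because that inequality is false. Take $X=\ell^{2}(\N)\oplus\C^{2}\oplus\C$ and $T=S\oplus J_{2}\oplus J_{1}$, where $S$ is the forward unilateral shift, $J_{2}$ acts on $\C^{2}=\langle e_{1},e_{2}\rangle$ by $J_{2}e_{2}=e_{1}$, $J_{2}e_{1}=0$, and $J_{1}=0$ on $\C=\langle f_{1}\rangle$. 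Then $T$ is upper semi-Fredholm ($\R(T)$ closed, $\alpha(T)=2$) and not semi-regular, since $f_{1}\in\mathcal{N}(T)\setminus\R(T)$; hence $v(T)=1$. Yet $\mathcal{N}(T)\cap\R(T)=\langle e_{1}\rangle\neq\{0\}=\mathcal{N}(T)\cap\R(T^{2})$, so the sequence $\alpha(T_{[n]})$ is $2,1,0,0,\dots$ and $\mbox{dis}(T)=2$. As you observe, $v$ detects the shortest Jordan chain of $T_{N}$ while $\mbox{dis}$ detects the longest, and nothing in the semi-Fredholm hypothesis forces these to coincide; the constancy of $\alpha(T-\lambda I)$ on a punctured disc that you hoped to invoke only identifies the stable value $\alpha(T_{[d]})$ with $\alpha(T-\lambda I)$ and says nothing about the intermediate values $\alpha(T_{[n]})$ for $0<n<d$.

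For comparison, the paper's own ``proof'' is a one-line citation: it reads Kato's Theorem 4 as producing a nilpotent part of degree $v(T)$ and then applies Theorem \ref{thmunique} (the degree of any Kato-type decomposition equals $\mbox{dis}(T)$) to conclude $v(T)=\mbox{dis}(T)$. The same example shows that this reading of Kato's theorem cannot be correct: by Remark \ref{remunique} every Kato-type decomposition of $S\oplus J_{2}\oplus J_{1}$ has nilpotent part of degree $2$, while $v=1$. In the Kato--West constructions the chains split off at successive stages have lengths that need not all equal $v(T)$ (in the example they are $1$ and $2$), and the degree of the resulting nilpotent operator is the maximum of those lengths, i.e. $\mbox{dis}(T)$. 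So your honest flag on the reverse inequality points exactly at where the statement itself breaks down; neither your sketch nor the paper's citation fills that gap, and the corollary following Theorem \ref{vd} (which asserts $\alpha(T_{[n]})=\alpha(T)$ for all $n<d$) fails on the same example.
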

\begin{proof}    It  is an immediate consequence of \cite[Theorem 4]{kato} (see also \cite[p.609]{west})    and    Theorem \ref{thmunique}.  (We also can  use  \cite[Theorem 4]{kato}, Corollary  \ref{corkato} and Remark \ref{remunique}).
\end{proof}
\begin{cor} If $T \in L(X)$ is semi-Fredholm and $d=\mbox{dis}(T)$ then
$$\alpha(T_{[n]})=\left\lbrace
\begin{array}{ll}
\alpha(T_{[d]}) &  \mbox{if n $\geq$ d}\\
\alpha(T)  &  \mbox{else}
\end{array}
\right.$$
and $$\beta(T_{[n]})=\left\lbrace
\begin{array}{ll}
\beta(T_{[d]}) &  \mbox{if n $\geq$ d}\\
\beta(T) & \mbox{else}
\end{array}
\right.$$
  In particular,
$$\alpha(T^{n})=\left\lbrace
\begin{array}{ll}
n\alpha(T) &  \mbox{if n $\leq$ d}\\
d\alpha(T)+(n-d)\alpha(T_{[d]})  &  \mbox{else}
\end{array}
\right.$$
and $$\beta(T^{n})=\left\lbrace
\begin{array}{ll}
n\beta(T) &  \mbox{if n $\leq$ d}\\
d\beta(T)+(n-d)\beta(T_{[d]})  &  \mbox{else}
\end{array}
\right.$$
\end{cor}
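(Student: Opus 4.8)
The plan is to reduce everything to the behaviour of the single sequence $k_n(T):=\dim\frac{\mathcal{N}(T_{[n]})}{\mathcal{N}(T_{[n+1]})}$ together with the identity $d=\mbox{dis}(T)=v(T)$ supplied by Theorem \ref{vd}. The starting observation is that $\mathcal{N}(T_{[n]})=\mathcal{N}(T)\cap\mathcal{R}(T^{n})$, so that
$$k_n(T)=\dim\frac{\mathcal{N}(T)\cap\mathcal{R}(T^{n})}{\mathcal{N}(T)\cap\mathcal{R}(T^{n+1})},$$
and that, as recorded in the proof of Proposition \ref{prop2}, both $\alpha$ and $\beta$ drop by exactly this amount at each step:
$$\alpha(T_{[n]})=\alpha(T_{[n+1]})+k_n(T),\qquad \beta(T_{[n]})=\beta(T_{[n+1]})+k_n(T).$$
Thus the sequences $(\alpha(T_{[n]}))_n$ and $(\beta(T_{[n]}))_n$ are controlled by the \emph{same} increments $k_n(T)$, and it suffices to determine where those increments are nonzero.

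Next I would locate the support of $(k_n(T))_n$. Since $\mbox{dis}(T)=d$, the definition of $\Delta(T)$ gives $\mathcal{N}(T_{[n]})=\mathcal{N}(T_{[n+1]})$, hence $k_n(T)=0$, for every $n\geq d$. If $T$ is semi-regular then $d=0$ and this already forces $k_n(T)=0$ for all $n$. If $T$ is not semi-regular, Theorem \ref{vd} yields $d=v(T)$, so $\mathcal{N}(T)\subset\mathcal{R}(T^{n})$ for every $n<d$; consequently $\mathcal{N}(T)\cap\mathcal{R}(T^{n})=\mathcal{N}(T)=\mathcal{N}(T)\cap\mathcal{R}(T^{n+1})$ whenever $n\leq d-2$, whence $k_n(T)=0$ for all $n\leq d-2$. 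In either case $k_n(T)=0$ for every $n\neq d-1$, so the only possible jump occurs between $n=d-1$ and $n=d$. Feeding this into the two recursions gives the plateau formulas at once: $\alpha(T_{[n]})$ is constant equal to $\alpha(T_{[0]})=\alpha(T)$ on $\{0,\dots,d-1\}$ and constant equal to $\alpha(T_{[d]})$ on $\{n\geq d\}$, and likewise $\beta(T_{[n]})=\beta(T)$ for $n<d$ and $\beta(T_{[n]})=\beta(T_{[d]})$ for $n\geq d$. This is the first assertion (read in $\N\cup\{\infty\}$, which covers the upper/lower semi-Fredholm cases where one of $\alpha,\beta$ may be infinite).

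For the ``in particular'' part I would use the telescoping identities $\alpha(T^{n})=\sum_{k=0}^{n-1}\alpha(T_{[k]})$ and $\beta(T^{n})=\sum_{k=0}^{n-1}\beta(T_{[k]})$, valid for every $T\in L(X)$. The first follows from the isomorphism $\mathcal{N}(T_{[k]})\cong\frac{\mathcal{N}(T^{k+1})}{\mathcal{N}(T^{k})}$ of \cite[Lemma 3.1]{kaashoek} (already invoked in the proof of Proposition \ref{prop11}), which gives $\alpha(T^{k+1})=\alpha(T^{k})+\alpha(T_{[k]})$; the second follows from $\beta(T_{[k]})=\dim\frac{\mathcal{R}(T^{k})}{\mathcal{R}(T^{k+1})}$, i.e. $\beta(T^{k+1})=\beta(T^{k})+\beta(T_{[k]})$. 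Summing these additive relations (so as to avoid any $\infty-\infty$) collapses the telescopes, since $\mathcal{N}(T^{0})=\{0\}$ and $\mathcal{R}(T^{0})=X$. Substituting the plateau values then yields $\sum_{k=0}^{n-1}\alpha(T_{[k]})=n\,\alpha(T)$ when $n\leq d$ and $=d\,\alpha(T)+(n-d)\,\alpha(T_{[d]})$ when $n>d$, with the identical computation for $\beta$.

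The only genuinely substantial ingredient is the equality $d=v(T)$ from Theorem \ref{vd}: this is precisely what pins the single jump of $k_n(T)$ to the index $n=d-1$. Without it one would know only that $(\alpha(T_{[n]}))_n$ is \emph{eventually} constant from $d$ onwards, not that it is already constant on $\{0,\dots,d-1\}$. The remaining steps are nothing more than the two increment recursions and the two telescoping sums, so I expect no real obstacle there beyond bookkeeping the boundary cases $d=0$ (the semi-regular case, where the ``else'' branches are vacuous) and $n=d$ (where the two branches of the $\alpha(T^{n})$, $\beta(T^{n})$ formulas agree).
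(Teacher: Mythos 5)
Your argument is correct and follows essentially the same route as the paper: both reduce to Theorem \ref{vd} (so that $\mathcal{N}(T)\subset\R(T^{n})$ for $n<d$) to get constancy of $\alpha(T_{[n]})$ and $\beta(T_{[n]})$ below $d$, use the definition of $\mbox{dis}(T)$ for constancy above $d$, and finish with the same telescoping sums $\alpha(T^{n})=\sum_{k=0}^{n-1}\alpha(T_{[k]})$, $\beta(T^{n})=\sum_{k=0}^{n-1}\beta(T_{[k]}).$ Your only cosmetic deviation is to package the plateau argument through the vanishing of the increments $k_{n}(T)$ for $n\neq d-1$ rather than quoting Kato's isomorphism $\frac{\mathcal{N}(T^{n+1})}{\mathcal{N}(T^{n})}\cong\mathcal{N}(T)$ directly, which is the same mechanism.
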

\begin{proof}  We will assume that $T$ is not semi-regular,  otherwise the assertion is trivial.   From  Theorem \ref{vd}  we have  $d=v(T).$ So   $\mathcal{N}(T)\subset \R(T^{n})$ and    therefore  \cite[Lemma 511]{kato} and  \cite[Lemma 3.1, Lemma 3.2]{kaashoek} imply that  $\frac{\mathcal{N}(T^{n+1})}{\mathcal{N}(T^{n})}\cong\mathcal{N}(T)$  and $\frac{\R(T^{n})}{\R(T^{n+1})}\cong \frac{X}{\R(T)}$ for every integer $n<d.$ On the other hand, by the definition of $\mbox{dis}(T)$ we have $\alpha(T_{[d]}) =\alpha(T_{[m]})$ and  $\beta(T_{[d]}) =\beta(T_{[m]})$ for every $m\geq d.$ Moreover, it is easily seen that   $\alpha(T^{m})=\displaystyle\sum_{k=0}^{m-1}\alpha(T_{[k]})$ and  $\beta(T^{m})=\displaystyle\sum_{k=0}^{m-1}\beta(T_{[k]})$ for every $m \in \N^{*}.$ So the proof is complete.
\end{proof}
\begin{cor} If $T \in L(X)$ is semi-Fredholm then  $T$ is semi-regular if and only if  $\mathcal{N}(T)\subset \R(T^{d});$ where $d=\mbox{dis}(T).$
\end{cor}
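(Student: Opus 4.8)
The plan is to treat the two implications separately, drawing on Theorem~\ref{vd} for the substantive direction. I first record a reduction: because $T$ is semi-Fredholm, its range $\R(T)$ is closed, so by definition $T$ is semi-regular exactly when $d:=\mbox{dis}(T)=0$. Thus the corollary amounts to showing that, for semi-Fredholm $T$, the inclusion $\mathcal{N}(T)\subset\R(T^{d})$ holds if and only if $d=0$.

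For the implication ``$T$ semi-regular $\Longrightarrow\mathcal{N}(T)\subset\R(T^{d})$'', I simply observe that semi-regularity forces $d=0$, whence $\R(T^{d})=\R(T^{0})=X$ and the inclusion is automatic.

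For the converse I argue by contraposition. Suppose $T$ is not semi-regular. Since $\R(T)$ is closed this gives $d>0$, and Theorem~\ref{vd} identifies $d=\mbox{dis}(T)$ with $v(T)=\mbox{inf}\{n\in\N:\mathcal{N}(T)\not\subset\R(T^{n})\}$. By the definition of this infimum, $v(T)$ is the least integer $n$ with $\mathcal{N}(T)\not\subset\R(T^{n})$; in particular $\mathcal{N}(T)\not\subset\R(T^{v(T)})=\R(T^{d})$. This contradicts $\mathcal{N}(T)\subset\R(T^{d})$, so the inclusion can hold only when $T$ is semi-regular.

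I do not anticipate any genuine difficulty: once Theorem~\ref{vd} supplies the equality $d=v(T)$ in the non-semi-regular case, the result is immediate from the meaning of $v(T)$. The single point meriting care is the remark that $d>0$ whenever $T$ fails to be semi-regular, which rests on the fact that a semi-Fredholm operator has closed range and so is semi-regular precisely when $d=0$.
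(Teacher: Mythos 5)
Your proof is correct and follows essentially the route the paper intends: the corollary is stated without proof as an immediate consequence of Theorem~\ref{vd} and the preceding remark that $d=v(T)$ when $T$ is not semi-regular (so that $\mathcal{N}(T)\not\subset\R(T^{d})$), together with the triviality of the inclusion when $d=0$. Your explicit observation that semi-regularity for a semi-Fredholm operator reduces to $d=0$ because the range is already closed is exactly the reduction the paper relies on.
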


\goodbreak
{\small \noindent Zakariae Aznay,\\  Laboratory (L.A.N.O), Department of Mathematics,\\Faculty of Science, Mohammed I University,\\  Oujda 60000 Morocco.\\
aznay.zakariae@ump.ac.ma\\

{\small \noindent  Abdelmalek Ouahab,\\  Laboratory (L.A.N.O), Department of Mathematics,\\Faculty of Science, Mohammed I University,\\  Oujda 60000 Morocco.\\
ouahab05@yahoo.fr\\

 \noindent Hassan  Zariouh,\newline Department of
Mathematics (CRMEFO),\newline
 \noindent and laboratory (L.A.N.O), Faculty of Science,\newline
  Mohammed I University, Oujda 60000 Morocco.\\
 \noindent h.zariouh@yahoo.fr

\end{document}